\newcommand{\Z}{\mathbb Z}
\newcommand{\F}{\mathbb F}
\newcommand{\ea}{\emph{et al.}}
\newcommand{\defeq}{\mathrel{\mathop:}=}
\DeclareMathOperator{\Gal}{Gal}
\DeclareMathOperator{\GL}{GL}
\begin{document}

\title{Computation of a $\num{30750}$-Bit Binary Field Discrete Logarithm}

\author{Robert Granger\inst{1} \and Thorsten Kleinjung\inst{2} \and Arjen K. Lenstra\inst{2} \and\\
  Benjamin Wesolowski\inst{3} \and Jens~Zumbr\"agel\inst{4}}

\institute{Surrey Centre for Cyber Security\\
  Department of Computer Science, University of Surrey, United Kingdom\\
  \email{r.granger@surrey.ac.uk}\\
  \and
  Laboratory for Cryptologic Algorithms,\\
  School of Computer and Communication Sciences, EPFL, Switzerland\\
  \email{thorsten.kleinjung@epfl.ch}\\  
  \and
  Univ. Bordeaux, CNRS, Bordeaux INP, IMB, UMR 5251, F-33400, Talence, France\\
	INRIA, IMB, UMR 5251, F-33400, Talence, France\\
  \email{benjamin.wesolowski@math.u-bordeaux.fr}\\
  \and
  Faculty of Computer Science and Mathematics, University of Passau, Germany
  \email{jens.zumbraegel@uni-passau.de}
}

\maketitle

\begin{abstract}
  This paper reports on the computation of a discrete logarithm in the finite field $\F_{2^{30750}}$, breaking by a large margin the previous record,
  which was set in January 2014 by a computation in~$\F_{2^{9234}}$.
  The present computation made essential use of the elimination step of the quasi-polynomial algorithm due to Granger, Kleinjung and Zumbr\"agel, 
  and is the first large-scale experiment to truly test and successfully demonstrate its potential when applied recursively, which is when it 
  leads to the stated complexity. It required the equivalent of about $\num{2900}$ core years on a single core of an Intel Xeon Ivy Bridge processor running at 
  2.6 GHz, which is comparable to the approximately $\num{3100}$ core years expended for the discrete logarithm record for prime fields, set in a field of 
  bit-length $795$, and demonstrates just how much easier the problem is for this level of computational effort. In order to make the computation feasible 
  we introduced several innovative techniques for the elimination of small degree irreducible elements, 		
  which meant that we avoided performing any costly Gr\"obner basis computations, in contrast to all previous records 	
  since early 2013. While such computations are crucial to the $L(\frac 1 4 + o(1))$ complexity algorithms, they were simply too slow for our purposes. 
  Finally, this computation should serve as a serious deterrent to cryptographers who are still proposing to rely on the discrete logarithm security of 
  such finite fields in applications, despite the existence of two quasi-polynomial algorithms and the prospect of even faster algorithms 
  being developed.

\keywords{Discrete logarithm problem, finite fields, binary fields, quasi-polynomial algorithm}
\end{abstract}


\section{Introduction}\label{sec:intro}

Let~$\F_2$ denote the finite field consisting of two elements, let $\F_{2^{30}} \defeq \F_2[T]/(T^{30} + T + 1)$ and let~$t$ be a root of the irreducible
polynomial defining the extension. Furthermore, let $\F_{2^{30750}} \defeq \F_{2^{30}}[X]/(X^{1025} + X + t^3)$ and let~$x$ be a root of the irreducible 
polynomial defining this extension, and consider the presumed generator $g \defeq x + t^9$ of the multiplicative group of $\F_{2^{30750}}$.
To select a target element that cannot be `cooked up' we follow the traditional approach of
using the digits of the mathematical constant~$\pi$, which in the present case leads to
\[
  h_{\pi} \defeq \sum_{i=0}^{30749} \left( \lfloor \pi \cdot 2^{i+1} \rfloor \bmod{2} \right) \cdot t^{29 - (i \bmod{30})} \cdot x^{\lfloor i / 30 \rfloor}
  \in \F_{2^{30750}} .
\]
On 28th May 2019 we completed the computation of the discrete logarithm of $h_{\pi}$ with respect to the base $g$, i.e., an integer solution~$\ell$
to $h_{\pi} = g^{\ell}$. The smallest non-negative solution is given in Appendix~\ref{app:B}, along with a verification script for the computational
algebra system Magma~\cite{magma}. The total running time for this computation was the equivalent of about $\num{2900}$ core years on a 
single core of an Intel Xeon Ivy Bridge or Haswell processor running at 2.6 GHz or 2.5 GHz.
For the sake of comparison, the previous record, which was set in $\F_{2^{9234}}$ in January 2014, required about $45$ core years~\cite{9234Ann}.
Other large scale computations of this type include: the factorisation of a $768$-bit RSA modulus, which took about $\num{1700}$ core years 
and was completed in 2009~\cite{RSA768}; the factorisation of $17$ Mersenne numbers with bit-lengths between $\num{1007}$ and $\num{1199}$
using Coppersmith's `factorisation factory' idea~\cite{CoppersmithFF}, which took about $\num{7500}$ core years and was completed in early 
2015~\cite{MFF}; the computation of a discrete logarithm in a $768$-bit prime field, which took about $\num{5300}$ core years and was 
completed in 2016~\cite{DLP768}; the computation of a discrete logarithm in a $795$-bit prime field, which took about $\num{3100}$ core years, 
and the factorisation of a $795$-bit RSA modulus, which took about $900$ core years, both completed in December 2019~\cite{795Ann}; and
finally, the factorisation of an $829$-bit RSA modulus, which took about $\num{2700}$ core years and was completed in February 2020~\cite{829Ann}.

A natural question that the reader may have is why did we undertake such a large-scale computation? The answer to this question is threefold. 
Firstly, between 1984 and 2013, the fastest algorithm for solving the discrete logarithm problem (DLP) in characteristic two (and more generally in fixed 
characteristic) fields was due to Coppersmith~\cite{coppersmith}. It has heuristic complexity $L_Q(\frac 1 3)$, where for $\alpha \in [0,1]$ and~$Q$ the
cardinality of the field, this notation is defined by
\[ L_Q( \alpha) \defeq \exp \big( O( (\log{Q})^{\alpha} (\log{\log{Q}})^{1-\alpha} ) \big) \]
as $Q \to \infty$; this is the usual complexity measure that interpolates between polynomial time (for $\alpha = 0$) and exponential time
(for $\alpha = 1$), and is said to be subexponential when $0 < \alpha < 1$. In 2013 and 2014 a series of breakthroughs occurred, indicating that the 
DLP in fixed characteristic fields was considerably easier to solve than had previously been believed~\cite{GGMZ13a,Joux13b,GGMZ13b,BGJT13,GKZ14a,GKZ14eprint,JP14,GKZ18}. 
These techniques were demonstrated with the setting of several world records for computations of this type~\cite{1778Ann,1971Ann,4080Ann,6120Ann,6168Ann,4404Ann,9234Ann}, using algorithms that had heuristic complexity at most $L_Q(\frac 1 4 + o(1))$, where $o(1) \to 0$ as $Q \to \infty$.
Amongst all of the breakthroughs from this period, the standout results were two independent and distinct quasi-polynomial algorithms for the DLP 
in fixed characteristic: the first due to Barbulescu, Gaudry, Joux and Thom\'e (BGJT)~\cite{BGJT13}; and the second due to Granger, Kleinjung and 
Zumbr\"agel (GKZ)~\cite{GKZ14eprint,GKZ18}. Both algorithms have heuristic complexity $L_Q(o(1))$, although the latter is rigorous once an 
appropriate field representation is known. As further discussed below, such representations exist, for instance, for Kummer extensions. While 
the computational records demonstrated the practicality of the $L(\frac 1 4 + o(1))$ techniques, it was the promise of the quasi-polynomial
algorithms that effectively killed off the use of small characteristic fields in discrete logarithm-based cryptography, even though these algorithms
had not been seriously tested. In particular, the descent step, or `building block' of the BGJT method has to our knowledge never been used for the 
individual logarithm stage. The descent step of the GKZ method, on the other hand, has been used during the individual logarithm stage for some small
degree eliminations for relatively small fields~\cite{1279Ann,JP14}, but not with a view to properly testing its applicability recursively, as it is
intended to be used. Since the GKZ algorithm becomes practical for far smaller bit-lengths than the BGJT algorithm, one aspect of our motivation was to
test its practicality by seeing in how large a field we could solve discrete logarithms, within a reasonable time. The act of doing so allows one to
assess the practical impact of the algorithm and indeed, its success in this case demonstrates that it can be applied effectively at a large scale.
It also demonstrates just how much easier the DLP is in binary fields than for prime fields, for this level of computational effort.
We note that a recent paper of the second and fourth listed authors proves that the complexity of the DLP in fixed characteristic fields is at most
quasi-polynomial, without restriction on the form of the extension degree -- in contrast to the GKZ algorithm -- thus complementing the practical
impact of the present computation with a rigorous theoretical algorithm for the general case~\cite{KW19}.

Secondly, we wanted to set a significant new record because in our experience of attempting such computations, one almost always encounters many 
unexpected obstacles that necessitate having new insights and developing new techniques in order to overcome them, both of which enrich our knowledge 
and understanding, as well as the state of the art. 
Indeed, it was the solving of a DLP in $\F_{2^{4404}}$ in 2014~\cite{GKZ14a} that led to the discovery of the 
GKZ algorithm. As Knuth has stated, \emph{``The best theory is inspired by practice. The best practice is inspired by theory.''}, which is 
as true in this field as it is in any other. 
In this work, we developed several innovative techniques for the elimination of small degree irreducible polynomials, 
which were absolutely vital to the feasibility of computing a discrete logarithm within such an enormous field. 
In particular, our \emph{new insights and contributions} include the following.
\begin{itemize}
\item A simple theoretical and efficient algorithmic characterisation of all so-called Bluher values.
\item A highly efficient direct method for eliminating degree 2 polynomials over $\F_{q^3}$ on the fly, which applies with probability $\approx \frac{1}{2}$.
\item A highly efficient backup method for eliminating degree 2 polynomials over $\F_{q^3}$ when the direct method fails, making the bottleneck of the computation feasible. In addition we found an interesting theoretical explanation for the observed probabilities.
\item A fast probabilistic elimination of degree 2 polynomials over $\F_{q^k}$ for $k > 3$.
\item A novel use of interpolation to efficiently compute roots of the special polynomials that arise.  
\item An extremely efficient and novel degree 3 elimination method, thus mollifying the new bottleneck in the computation, which was also applied to polynomials of degree 6, 9 and 12.
\item A new technique for eliminating polynomials of degrees 5 and 7, which together with the techniques for the other small degree polynomials meant that \emph{no costly Gr\"obner basis computations were performed} at all. Although such computations are essential to the $L_Q(\frac 1 4 + o(1))$ complexity algorithms, they were simply too slow for our purposes.
\item A highly optimised classical descent using the above elimination costs as input in order to apply a dynamic programming approach.
\end{itemize}

Finally, although discrete logarithm-based cryptography in small characteristic finite fields has effectively been rendered unusable, there are
constructive applications in cryptography for fast discrete logarithm algorithms. For example, knowledge of certain discrete logarithms in binary 
fields can be exploited to create highly efficient, constant time masking functions for tweakable blockciphers~\cite{GJMN16}. Further afield, there 
are several applications which would benefit from efficient discrete logarithm algorithms, although not necessarily in the fixed 
characteristic case. For example, in matrix groups a problem in $\GL(n, q)$ can often be mapped to $\GL(1, q^n)$ and naturally becomes a DLP in 
$\F_{q^n}$.
Also, discrete logarithms are needed in various scenarios when reducing the unit group of an order of a number field modulo a prime. In finite geometry 
one encounters near-fields, where multiplication usually requires computing discrete logarithms, and there are other applications 
in representation theory, group theory and Lie algebras in the modular case. The relevance of the DLP to computational mathematics therefore extends 
far beyond cryptography. Despite the existence of the quasi-polynomial algorithms, the use of prime degree extensions of $\F_2$ has recently been 
proposed in the design of a secure compressed encryption scheme of Canteaut~\ea~\cite[Sec.~5]{Canteaut}. 
For $80$ bits of security a prime extension of degree $\approx \num{16000}$ was suggested (and of degree 
$\num{4000000}$ for $128$-bit security), very conservatively based on the polynomial time first 
stage of index calculus as described in~\cite{JP14}, but ignoring the dominating quasi-polynomial individual logarithm stage. Even though the security of 
such fields is massively underestimated and the resulting schemes are prohibitively slow, it is interesting, to say the least, that these proposals were 
made. Although the computation reported here is not for a prime degree extension of $\F_2$, our result should nevertheless be regarded as a serious 
deterrent against such applications. Indeed, the central remaining open problem in this area, which is to find a polynomial time algorithm for the DLP in fixed characteristic fields (either rigorous or heuristic) is far more likely to be solved now than it was before 2013.
 
Another natural question that the reader may have is why $\num{30750}$, exactly? 
In contrast to the selection of $h_{\pi}$, this extension degree was most 
certainly `cooked up' specifically for setting a new record\footnote{The fact that $\num{30750}$ is precisely the seating 
capacity of the first listed author's home football stadium is purely a coincidence; see~\url{https://en.wikipedia.org/wiki/Falmer_Stadium}.}. In
particular, $\num{30750} = 3 \cdot 10 \cdot (2^{10} + 1)$ so that the field is of the form $\F_{q^{3(q+1)}}$, thus permitting the use 
of a twisted Kummer extension, in which it is much easier to solve logarithms than in other extensions of the same order of magnitude. Firstly, as explained in~\S\ref{sec:degree1} the factor base which consists of all monic degree one elements possesses an automorphism of order 
$\num{3075}$. This reduces the cardinality of the factor base from $2^{30}$ to a far more manageable $\num{349185}$. Secondly, such field representations ensure that when eliminating an element during the descent step, the cofactors have minimal degree and therefore will be smooth with maximum probability (under a uniformity assumption when not using the GKZ elimination step). Thirdly, there are no so-called `traps' during the descent, i.e., elements which can not be eliminated~\cite{GKZ18}. The first two reasons explain why Kummer and twisted Kummer extensions have been used for all of the records since 2013.
Observe that for the previous record we have $\num{9234} = 2 \cdot 9 \cdot (2^9+1)$. However, whereas for the $\num{9234}$-bit computation the 
factor base consisted of all degree one and irreducible degree two elements, for the present computation it would be too costly to compute and store the 
logarithms of such a factor base, even when incorporating the automorphism. So it was essential that the logarithms of degree two elements could be 
computed `on the fly', i.e., as and when needed, without batching. For a base field of the form $\F_{q^3}$ this is non-trivial as each degree two element 
only has a half chance of being eliminable when the method of~\cite{GGMZ13a} is applied. For our target field, a backup approach similar in spirit to that 
used for the $\num{6120} = 3 \cdot 8 \cdot (2^8-1)$-bit record can be used~\cite{6120Ann,GGMZ13b}, and the chosen parameters allow for this with an acceptable probability. However, in order to make the computation feasible, as already mentioned it was essential to develop new techniques for the elimination of degree two and other small degree polynomials and to optimise them algorithmically, arithmetically and implementation-wise. For the number of core years used we therefore believe with high confidence that we have solved as large a DLP as is possible with current state of the art techniques.

The sequel is organised as follows. In~\S\ref{sec:Bluher} we present some results on so-called Bluher polynomials that are essential to the theory and
practice of our computation. In~\S\ref{sec:GKZ} we recall the GKZ algorithm, explaining the importance of degree two elimination and how this 
leads to a quasi-polynomial algorithm. We describe the field setup in~\S\ref{sec:setup}, and in~\S\ref{sec:degree1} describe how we computed the 
logarithms of the factor base elements, namely the degree one elements. Subsequently we present our degree two elimination methods 
in~\S\ref{sec:degree2} and more generally even degree element elimination in~\S\ref{sec:smalleven}, and in~\S\ref{sec:smallodd} detail how we 
eliminated small odd degree elements. In~\S\ref{sec:BluherRoots} we explain how we efficiently compute the roots of Bluher polynomials in various
scenarios, while in~\S\ref{sec:classical} we detail the classical descent strategy, analysis and timings. In~\S\ref{sec:canteaut} we briefly discuss the security of the proposal of Canteaut~\ea, and finally in~\S\ref{sec:conclusion} we make some concluding remarks.


\section{Bluher Polynomials and Values}\label{sec:Bluher}

Let $q$ be a prime power, let $k \ge 3$ and consider the polynomial $F_B(X) \in \F_{q^k}[X]$  defined by
\begin{equation}\label{eqn:bluherpoly}
F_B(X) \defeq X^{q+1} - BX + B.
\end{equation}
We call such a polynomial a Bluher polynomial. A basic question that arises repeatedly for us is for what values $B \in \smash{\F_{q^k}^{\times}}$ does 
$F_B(X)$ split over $\F_{q^k}$, i.e., factor into a product of $q+1$ linear polynomials in $\F_{q^k}[X]$? With this in mind we define $\mathcal{B}_k$ to 
be the set of all $B \in \F_{q^k}^{\times}$ such that $F_B(X)$ splits over $\F_{q^k}$, and call the members of $\mathcal{B}_k$ Bluher values. It turns 
out that there are some simple characterisations of $\mathcal{B}_k$ which enable various computations to be executed very efficiently. We first recall 
a result of Bluher.
\begin{theorem}{\cite{bluher}}\label{thm:BluherCount}
  The number of elements $B \in \F_{q^k}^{\times}$ such that the
  polynomial $F_B(X)$ splits completely over $\F_{q^k}$ equals
  \[ \dfrac{q^{k-1}-1}{q^{2} - 1} \quad \text{if~$k$ odd,} \qquad
  \dfrac{q^{k-1}-q}{q^{2} - 1} \quad \text{if~$k$ even.} \]
\end{theorem}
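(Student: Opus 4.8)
The plan is to rephrase ``$F_B$ splits over $\F_{q^k}$'' as a single equation in $\mathrm{PGL}_2(\F_{q^k})$ and then count its solutions of the relevant shape via Lang's theorem. First I would note that for $B\ne 0$ the polynomial $F_B$ is separable with no root in $\{0,1\}$: in characteristic~$p$ one has $q+1\equiv 1$, so $F_B'(X)=X^q-B$, whose only root is $\beta\defeq B^{1/q}$, and $F_B(\beta)=\beta\cdot\beta^q-B\beta+B=B\ne 0$, whence $\gcd(F_B,F_B')=1$; also $F_B(0)=F_B(1)=B\ne 0$. Thus $F_B$ has exactly $q+1$ distinct roots in $\overline{\F_q}\setminus\{0,1\}$, and each root $w$ satisfies $w^{q+1}=B(w-1)$, i.e.\ $w^q=\mu_B(w)$, where $\mu_B\colon z\mapsto B(z-1)/z$ is the Möbius map with invertible matrix $M_B\defeq\left(\begin{smallmatrix}B&-B\\1&0\end{smallmatrix}\right)$ (note $\mu_B(1)=0$, $\mu_B(0)=\infty$). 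Applying the $q$th-power Frobenius $F$ to $w^q=\mu_B(w)$ repeatedly gives $w^{q^k}=\Theta_B(w)$, where $\Theta_B$ is the Möbius map with matrix $F^{k-1}(M_B)\cdots F(M_B)M_B$ ($F$ acting entry-wise, so $F^{j}(M_B)=M_{B^{q^j}}$).

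A root $w$ lies in $\F_{q^k}$ exactly when $\Theta_B(w)=w$; since a nontrivial Möbius map has at most two fixed points whereas $q+1\ge 3$, this yields: $F_B$ splits over $\F_{q^k}$ $\iff$ $\Theta_B=\mathrm{id}$ $\iff$ the image $g\defeq\overline{M_B}\in\mathrm{PGL}_2(\F_{q^k})$ satisfies $F^{k-1}(g)\cdots F(g)g=1$. Let $Z\defeq\{g\in\mathrm{PGL}_2(\F_{q^k}):F^{k-1}(g)\cdots F(g)g=1\}$. A telescoping computation shows $F(h)h^{-1}\in Z$ for all $h\in\mathrm{PGL}_2(\F_{q^k})$; conversely, by the Lang--Steinberg theorem any $g\in Z$ equals $F(h)h^{-1}$ for some $h\in\mathrm{PGL}_2(\overline{\F_q})$, and then $F^k(h)=\bigl(F^{k-1}(g)\cdots g\bigr)h=h$, so in fact $h\in\mathrm{PGL}_2(\F_{q^k})$. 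Hence $Z$ is precisely the image of the Lang map $h\mapsto F(h)h^{-1}$ on $\mathrm{PGL}_2(\F_{q^k})$.

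Since $B\mapsto\overline{M_B}$ is a bijection from $\F_{q^k}^\times$ onto the set $\mathcal F$ of Möbius maps over $\F_{q^k}$ sending $1\mapsto0$ and $0\mapsto\infty$ (each being $z\mapsto c(z-1)/z$ for a unique $c\in\F_{q^k}^\times$), the quantity sought is $|\mathcal F\cap Z|$. Writing $g=F(h)h^{-1}\in\mathcal F\cap Z$ in terms of $b\defeq h^{-1}(\infty)$, the conditions $g(0)=\infty$ and $g(1)=0$ translate into $h^{-1}(0)=b^q$ and $h^{-1}(1)=b^{q^2}$ (using $F(h)^{-1}=F(h^{-1})$, which sends $\infty,0$ to $(h^{-1}(\infty))^q,(h^{-1}(0))^q$). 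So $h$ is the unique element of $\mathrm{PGL}_2(\F_{q^k})$ whose inverse maps $\infty,0,1$ to $b,b^q,b^{q^2}$, and it exists iff these three points are distinct, i.e.\ iff $b\in\mathbb P^1(\F_{q^k})\setminus\mathbb P^1(\F_{q^2})$. This gives a surjection $\mathbb P^1(\F_{q^k})\setminus\mathbb P^1(\F_{q^2})\to\mathcal F\cap Z$ whose fibres are the $\mathrm{PGL}_2(\F_q)$-orbits; each such $b$ has degree $\ge3$ over $\F_q$, so a nontrivial $\F_q$-rational Möbius map fixing $b$ would fix its $\ge3$ Galois conjugates and hence be trivial — the action is free. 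With $\mathbb P^1(\F_{q^k})\cap\mathbb P^1(\F_{q^2})=\mathbb P^1(\F_{q^{\gcd(2,k)}})$ and $|\mathrm{PGL}_2(\F_q)|=q^3-q$ this gives $|\mathcal F\cap Z|=(q^k-q^{\gcd(2,k)})/(q^3-q)$, which is $(q^{k-1}-1)/(q^2-1)$ for $k$ odd and $(q^{k-1}-q)/(q^2-1)$ for $k$ even.

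I expect the delicate points to be the Frobenius bookkeeping in the first step — the exact order of the twisted factors in the matrix of $\Theta_B$, and checking that the Lang-map preimage of an element of $Z$ can be taken in $\mathrm{PGL}_2(\F_{q^k})$ rather than merely in $\mathrm{PGL}_2(\overline{\F_q})$ — and the freeness of the $\mathrm{PGL}_2(\F_q)$-action on the parametrising set, which is exactly where the dichotomy in the statement emerges from the single quantity $\gcd(2,k)$.
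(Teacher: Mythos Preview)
The paper does not prove this theorem; it is quoted from Bluher's paper \cite{bluher} without proof. Your argument is therefore an independent proof, and it is correct apart from one harmless slip: $F_B(1)=1$, not $B$ (the conclusion that $1$ is not a root is unaffected).

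Your route via the Lang--Steinberg theorem on $\mathrm{PGL}_2$ is genuinely different from Bluher's original argument, which proceeds by a careful case analysis of the possible factorisation types of $F_B$ (showing the number of $\F_{q^k}$-roots is always $0$, $1$, $2$ or $q+1$) followed by a global count. Your approach is more conceptual and uniform: the entire dichotomy odd/even collapses into the single quantity $\gcd(2,k)$, and the answer drops out of one orbit count. The price is the appeal to Lang--Steinberg, whereas Bluher's proof is entirely elementary. It is also worth noting that your parametrising point $b\in\mathbb P^1(\F_{q^k})\setminus\mathbb P^1(\F_{q^2})$ is essentially the projective version of the element $u\in\F_{q^k}\setminus\F_{q^2}$ appearing in the paper's Theorem~\ref{thm:HellesethKholosha}; your free $\mathrm{PGL}_2(\F_q)$-action explains transparently why that parametrisation of $\mathcal B_k$ is $(q^3-q)$-to-one on the affine part, a fact the paper uses but does not derive.
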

One characterisation of $\mathcal{B}_k$ is the following generalisation of a theorem due to Helleseth and Kholosha.
\begin{theorem}{\cite[Lemma.~4.1]{GKZ18}}\label{thm:HellesethKholosha}
We have
\[ \mathcal{B}_k = \Big\{ \frac{(u - u^{q^2})^{q+1}} {(u - u^{q})^{q^2 + 1}}
  \,\big\vert\, u \in \F_{q^k} \setminus \F_{q^2} \Big\} \,. \]
\end{theorem}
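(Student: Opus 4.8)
The plan is to establish a bijection-style argument connecting Bluher values $B$ to the fibers of a natural rational map. The key observation is that $F_B(X)$ splits completely over $\F_{q^k}$ precisely when the projective linear fractional transformation associated to $B$ fixes a certain structure; more concretely, $X^{q+1} - BX + B$ is $\F_{q^k}$-split if and only if the polynomial shares $q+1$ roots in $\F_{q^k}$, and the classical trick (going back to Bluher, and used in \cite{GKZ18}) is to parametrize these via the substitution coming from the map $u \mapsto \frac{u - u^{q^2}}{u - u^q}$ or a close variant. First I would verify that for any $u \in \F_{q^k} \setminus \F_{q^2}$, the value $B_u \defeq \frac{(u - u^{q^2})^{q+1}}{(u - u^q)^{q^2+1}}$ is a well-defined nonzero element of $\F_{q^k}$: nonvanishing of the denominator follows from $u \notin \F_{q}$ (which is implied by $u \notin \F_{q^2}$), and nonvanishing of the numerator from $u \notin \F_{q^2}$; moreover one checks $B_u \neq 0$ and, with a short Frobenius computation, that $B_u$ indeed lies in $\F_{q^k}$ (it is fixed by the $q$-power Frobenius raised to the appropriate power, using that the exponents $q+1$ and $q^2+1$ interact correctly with $\operatorname{Gal}(\F_{q^k}/\F_q)$ — here one uses $\gcd$-type facts about $k$).

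The heart of the argument is to show $B_u \in \mathcal{B}_k$, i.e. that $F_{B_u}$ splits. The approach I would take is to exhibit the $q+1$ roots explicitly as $\F_{q^k}$-rational expressions in $u$ and its conjugates. A standard computation shows that if we set $y = \frac{u - u^q}{\,\cdot\,}$ type quantities, then the roots of $F_{B_u}$ are the $q+1$ images of $u$ under the order-$(q+1)$ cyclic subgroup of $\mathrm{PGL}_2(\F_q)$ that stabilizes the relevant pair of Frobenius-conjugate points; concretely one verifies by direct substitution that each such image $\gamma \cdot u$ satisfies $X^{q+1} - B_u X + B_u = 0$. Conversely, given $B \in \mathcal{B}_k$ with a root $r \in \F_{q^k}$, one reconstructs a $u$ producing $B = B_u$ — roughly, $u$ is recovered from $r$ by inverting the same fractional transformation, and one must check $u \notin \F_{q^2}$ (else $F_B$ would not split, or would be degenerate). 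This two-way construction shows the set on the right equals $\mathcal{B}_k$.

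I expect the main obstacle to be the bookkeeping around which $u$'s give the same $B$ and confirming the count matches Theorem~\ref{thm:BluherCount} — but strictly speaking, matching the count is not logically necessary for the set equality, so I would first try to prove both inclusions directly and only use the counting formula as a consistency check. The genuinely delicate point is the $\subseteq$ direction after taking closure under Galois and under the order-$(q+1)$ symmetry: one must ensure that the explicit roots one writes down actually lie in $\F_{q^k}$ and not merely in some extension, which is where the precise exponents $q+1$ and $q^2+1$ in the formula are forced. Since this theorem is cited as \cite[Lemma 4.1]{GKZ18} and generalizes Helleseth–Kholosha, I would follow their substitution closely, adapting the Frobenius-rationality check from the $k=3$ case to general $k$; the only new ingredient for general $k$ is verifying that no spurious collisions with $\F_{q^2}$ arise, which follows from $u \notin \F_{q^2}$ being preserved under the relevant transformations.
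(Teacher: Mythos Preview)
The paper does not contain a proof of this statement: Theorem~\ref{thm:HellesethKholosha} is quoted verbatim as \cite[Lemma~4.1]{GKZ18} and used as a black box, so there is no in-paper argument to compare your proposal against. Your sketch is therefore not at odds with anything in the paper, but it also cannot be said to match the paper's approach, since none is given here.

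As a side remark on your outline itself: the check that $B_u \in \F_{q^k}$ is vacuous, since $B_u$ is a rational expression in $u \in \F_{q^k}$ and its Frobenius conjugates, all of which already lie in $\F_{q^k}$; there is nothing to verify there. The substantive content, as you correctly identify, is the two inclusions, and the forward direction (every $B_u$ is a Bluher value) is indeed handled in the literature by writing down the $q+1$ roots of $F_{B_u}$ explicitly as rational functions of $u$ and its Frobenius images. Your plan for the reverse inclusion via a counting argument against Theorem~\ref{thm:BluherCount} is the cleanest route and is how the cited reference proceeds; attempting a direct constructive inverse, as you also suggest, is possible but messier. If you want the actual argument you should consult \cite{GKZ18} directly.
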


Theorem~\ref{thm:HellesethKholosha} is useful for sampling from $\mathcal{B}_k$. However, sometimes it is desirable to test whether a given element of 
$\F_{q^k}$ is a Bluher value, without factorising (\ref{eqn:bluherpoly}) and without first enumerating~$\mathcal{B}_k$ and checking membership, which 
may be intractable when~$k$ is so large that there are prohibitively many Bluher values to precompute. To this end, we define the following polynomials.
Let $P_1(X) = 1$, $P_2(X) = 1$, and for $i \ge 3$ define $P_i(X)$ by the recurrence
\begin{equation}\label{eqn:BluherRecurrence}
P_i(X) = P_{i-1}(X)  - X^{q^{i-3}} P_{i-2}(X) . 
\end{equation}

\begin{theorem}\label{thm:BluherRecurrence}
An element $B \in \F_{q^k}^{\times}$ is a Bluher value if and only if $P_k(\frac 1 B) = 0$.
\end{theorem}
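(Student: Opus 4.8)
The plan is to connect the splitting behavior of $F_B(X) = X^{q+1} - BX + B$ to the vanishing of $P_k(1/B)$ by tracking how the factorization of $F_B$ over successive extensions $\F_{q^i}$ is governed by the recurrence~(\ref{eqn:BluherRecurrence}). A natural way in is to observe that $F_B$ has no roots in $\F_q$ (indeed $F_B(0) = B \neq 0$ and $F_B(1) = 1 \neq 0$ after the substitution, so the roots live in genuine extensions), and then to understand the degrees of the irreducible factors of $F_B$ over $\F_{q^k}$. The key structural fact I would try to exploit is the well-known action of the Frobenius $\phi : \alpha \mapsto \alpha^q$ on the roots: if $\alpha$ is a root of $F_B$ then $\phi(\alpha)$ is a root of $F_{B^q}$, so when $B \in \F_{q^k}^\times$ the roots are permuted by $\phi^k$, and $F_B$ splits over $\F_{q^k}$ exactly when every root is fixed by $\phi^k$, i.e., lies in $\F_{q^k}$. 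So the real task is to encode ``all $q+1$ roots lie in $\F_{q^k}$'' as the single polynomial condition $P_k(1/B) = 0$.

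The mechanism I expect to work: substitute $X = 1/Y$ (legitimate since $0$ is not a root) to rewrite $F_B(X) = 0$ as a condition on $Y$ that, after clearing denominators, reads something like $B = Y^{q+1}/(Y^{q+1} - Y + \cdots)$ — or more usefully, iterate the defining relation. Writing $B = X^{q+1}/(X - 1)$ from $F_B(X)=0$ and setting $C = 1/B$, one gets $C = (X-1)/X^{q+1}$. Now I would introduce the sequence of ``partial evaluations'' obtained by repeatedly applying Frobenius to a hypothetical root and using the relation; the claim is that the polynomials $P_i$ are precisely the numerators (or denominators) that appear when one expresses the $i$-fold Frobenius iterate of a root in terms of $C$, via the linear-fractional-in-disguise recurrence $P_i = P_{i-1} - X^{q^{i-3}} P_{i-2}$ — the exponents $q^{i-3}$ being exactly what a Frobenius twist at the $(i-2)$-th step contributes. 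The condition that the orbit closes up after $k$ steps (so that a root of $F_B$ generates at most $\F_{q^k}$, forcing a complete split given the count in Theorem~\ref{thm:BluherCount}) should then be exactly $P_k(C) = 0$ with $C = 1/B$. One should cross-check consistency with Theorem~\ref{thm:BluherCount}: a direct count shows $\deg P_k$ matches (up to the unit group) the stated cardinality of $\mathcal B_k$, which would pin down that $P_k(1/B) = 0$ is not merely necessary but also sufficient, and that $P_k$ has no repeated roots away from $0$.

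The main obstacle, I expect, is making the ``orbit closes up'' step rigorous: one must show that $P_k(1/B) = 0$ forces \emph{every} root of $F_B$ into $\F_{q^k}$ (not just one), and conversely that a complete split implies the single scalar identity. The cleanest route is probably an induction on $k$ using the recurrence directly: relate $F_B$'s factor degrees over $\F_{q^i}$ to $P_i$, show $P_i(1/B) = 0$ exactly when $F_B$ acquires a new root in $\F_{q^i} \setminus \F_{q^{i-1}}$ together with its Frobenius conjugates accounting for all remaining roots, and feed the Bluher count in as the base-case bookkeeping that guarantees the dimension count is tight. An alternative, possibly slicker, is to identify $P_k(1/B)$ (up to an explicit nonzero factor like a power of $B$) with the resultant or a suitable symmetric function of the roots of $F_B$ that vanishes precisely on the split locus; verifying that identity is then a finite computation with the recurrence, and the degree check against Theorem~\ref{thm:BluherCount} finishes it. I would pursue the induction first, since the recurrence~(\ref{eqn:BluherRecurrence}) is tailor-made for it, and fall back on the resultant identity only if the inductive hypothesis turns out awkward to state.
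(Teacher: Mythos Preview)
Your primary route---track Frobenius iterates via the recurrence, then use the degree match with Theorem~\ref{thm:BluherCount} to upgrade necessity to sufficiency---is exactly the paper's argument. The paper makes your ``orbit closes up'' step clean by working not with an individual root but in the quotient ring $\F_{q^k}[X]/(F_B)$: starting from $X^q \equiv (X-1)/(CX) \pmod{F_B}$ with $C = 1/B$, an induction on the recurrence gives
\[
X^{q^i} \equiv \frac{P_{i+1}(C)\,X - P_i^q(C)}{C^{q^{i-1}}\bigl(P_i(C)\,X - P_{i-1}^q(C)\bigr)} \pmod{F_B},
\]
and since $F_B$ splits over $\F_{q^k}$ exactly when $X^{q^k} \equiv X \pmod{F_B}$, equating the two sides at $i=k$ forces $P_k(C)=0$. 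This device dissolves the ``one root versus all roots'' worry you flagged, because the congruence modulo $F_B$ automatically encodes all roots simultaneously. The converse is then pure counting: an easy induction shows $\deg P_k$ equals $|\mathcal B_k|$ from Theorem~\ref{thm:BluherCount}, so the inclusion $\{1/B : B \in \mathcal B_k\} \subseteq \{P_k = 0\}$ is an equality. Your resultant fallback is unnecessary.
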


\begin{proof}
  A simple induction shows that the degree of $P_k$ equals the number of
  Bluher values given in Theorem~\ref{thm:BluherCount}, so it suffices to
  prove the only if part.
  Let $B$ be a Bluher value and $C = \frac1B$.
  Using $X^q \equiv \frac{X - 1}{CX} \pmod{F_B}$ and induction, for $i \ge 2$
  one obtains
  \[ X^{q^i} \equiv \frac{P_{i+1}(C)X-P_i^q(C)}{C^{q^{i-1}} (P_i(C)X-P_{i-1}^q(C))}
  \pmod{F_B}. \]
  Since $X \equiv X^{q^k} \pmod{F_B}$ by assumption,
  $C^{q^{k-1}} (P_k(C)X-P_{k-1}^q(C)) X \equiv P_{k+1}(C)X-P_k^q(C) \pmod{F_B}$
  and thus $P_k^q(C)=0$.
\end{proof}

One can efficiently test if an element $B \in \smash{\F_{q^k}^{\times}}$ is a Bluher value simply by evaluating $P_k(\frac 1 B)$ using the
recurrence~(\ref{eqn:BluherRecurrence}). By rewriting the recurrence in a matrix form it is possible to apply fast methods for evaluating it.
However, since~$k$ is quite small for most of the computations, we did not use this. 
Furthermore, one can obviously compute $\mathcal{B}_k$ by applying the recurrence~(\ref{eqn:BluherRecurrence}) and factorising $P_k$.


\section{The GKZ Algorithm}\label{sec:GKZ}

In this section we sketch the main idea behind the GKZ algorithm, referring the reader to the original paper for rigorous statements and 
proofs~\cite{GKZ18}. 

Assume there exist coprime $h_0, h_1 \in \F_{q^k}[X]$ of degree at most two such that there exists a monic irreducible 
polynomial~$I$ of degree~$n$ with $I \mid h_1 X^q - h_0$.  Let $\F_{q^{kn}} \defeq \F_{q^k}[X]/(I) = \F_{q^k}(x)$ where~$x$
is a root of~$I$.  Then we have the following commutative diagram:
\begin{center}\begin{tikzcd}
& \F_{q^k}[X,Y] \arrow[ld, "Y \mapsto X^q"']\arrow[rd, "Y \mapsto h_0/h_1"] &\\
R_1 = \F_{q^k}[X]\arrow[rd] & & R_2 = \F_{q^k}[X][1/h_1]\arrow[ld]\\
& \F_{q^k}[X]/(I). &
\end{tikzcd}\end{center}
Assume further for the moment that for any $d \ge 1$, it is possible in time polynomial in~$q$ and~$d$ to express a given irreducible degree
two element $Q \in \F_{q^{kd}}[X]$ as a product of at most $q+2$ linear elements in $\F_{q^{kd}}[X]$ mod $I$, i.e.,
\begin{equation}\label{eq:degree2}
Q \equiv \prod_{i=1}^{q+2} (X + a_i) \pmod{I}, \quad \text{with} \ a_i \in \F_{q^{kd}} .
\end{equation}
We have the following.
\begin{proposition}{\cite[Prop.~3.2]{GKZ18}}\label{prop:buildingblock}
For the stated field representation, let $d \ge 1$ and let $Q \in \F_{q^{k}}[X]$ be an irreducible polynomial of degree $2d$. Then $Q$ can be 
rewritten in terms of at most $q + 2$ irreducible polynomials of degrees dividing $d$ in an expected running time polynomial in~$q$ and in~$d$.
\end{proposition}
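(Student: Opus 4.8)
The plan is to reduce the degree $2d$ elimination over $\F_{q^k}$ to the degree $2$ case over the extension $\F_{q^{kd}}$ --- which is exactly what the assumed building block~(\ref{eq:degree2}) handles --- and then to push the resulting relation back down to $\F_{q^k}$ by forming the product over the Galois group of $\F_{q^{kd}}/\F_{q^k}$. We may assume $Q$ is monic and that $Q \neq I$, so that $Q$ and every element produced below is a unit modulo $I$.

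First I would pass to $K \defeq \F_{q^{kd}}$ and split off one irreducible factor of $Q$ there. Since $Q$ is irreducible of degree $2d$ over $\F_{q^k}$ and $K/\F_{q^k}$ has degree $d$, a root of $Q$ generates a degree $2$ extension of $K$, so $Q$ factors over $K$ into $d$ monic irreducible quadratics. Now $K/\F_{q^k}$ is Galois with cyclic group generated by the $q^k$-power Frobenius $\sigma$; it permutes these $d$ factors transitively, since a $\sigma$-stable partial product would lie in $\F_{q^k}[X]$ and properly divide $Q$; and as the group has order $d$, equal to the number of factors, the action is in fact simply transitive. Hence, writing $\tilde Q$ for any one of the factors, $Q = \prod_{j=0}^{d-1} \sigma^j(\tilde Q)$ as an identity in $\F_{q^k}[X]$. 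Producing one such $\tilde Q$ is an equal-degree splitting over $K$, which takes expected time polynomial in $q$ and $d$, and the remaining conjugates $\sigma^j(\tilde Q)$ then come for free.

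Next I would invoke~(\ref{eq:degree2}) with base field $K$ on the irreducible quadratic $\tilde Q \in K[X]$, obtaining in expected time polynomial in $q$ and $d$ elements $a_1, \dots, a_{q+2} \in K$ with $\tilde Q \equiv \prod_{i=1}^{q+2}(X + a_i) \pmod{I}$. Because $I$ has coefficients in $\F_{q^k} = K^{\sigma}$, the automorphism $\sigma$ induces a ring automorphism $\bar\sigma$ of $K[X]/(I)$ fixing $X$, so applying $\bar\sigma^{j}$ gives $\sigma^j(\tilde Q) \equiv \prod_{i=1}^{q+2}(X + \sigma^j(a_i)) \pmod{I}$ for each $j$; multiplying these over $j = 0, \dots, d-1$ and using $Q = \prod_j \sigma^j(\tilde Q)$ yields
\[ Q \equiv \prod_{i=1}^{q+2} \Bigl( \prod_{j=0}^{d-1} \bigl( X + \sigma^j(a_i) \bigr) \Bigr) \pmod{I}. \]
For each $i$ let $e_i \defeq [\F_{q^k}(a_i) : \F_{q^k}]$, which divides $d$; the $\sigma$-orbit of $a_i$ has size $e_i$ and is run through $d/e_i$ times as $j$ ranges over $0, \dots, d-1$, so the inner product equals $N_i(X)^{d/e_i}$, where $N_i(X)$ --- the product of $X + c$ over that orbit --- is the monic minimal polynomial of $-a_i$ over $\F_{q^k}$, irreducible of degree $e_i \mid d$. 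Thus $Q \equiv \prod_{i=1}^{q+2} N_i(X)^{d/e_i} \pmod{I}$, which is the claimed rewriting of $Q$ in terms of at most $q+2$ irreducible polynomials over $\F_{q^k}$ of degrees dividing $d$. Forming the $N_i$ from the conjugates $\sigma^j(a_i)$ is polynomial in $q$ and $d$, so the total cost --- an equal-degree splitting over $\F_{q^{kd}}$, one call to~(\ref{eq:degree2}) over $\F_{q^{kd}}$, and $O(qd)$ Frobenius evaluations --- is expected-polynomial in $q$ and $d$ as claimed.

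Within the scope of this proposition the genuinely hard, probabilistic part --- producing the degree $2$ relation and controlling the probability with which it occurs --- is precisely what~(\ref{eq:degree2}) is assumed to supply, so the remaining work is the bookkeeping of this Frobenius descent. The three points I would be most careful about are: (i) the simple transitivity of the Galois action on the irreducible factors of $Q$ over $K$, which is what makes $\prod_j \sigma^j(\tilde Q)$ equal $Q$ on the nose rather than a power of a proper sub-product; (ii) the legitimacy of transporting the congruence modulo $I$ through $\bar\sigma$, which rests on $I$ having coefficients in $\F_{q^k}$; and (iii) tracking the multiplicities $d/e_i$ when some $a_i$ lie in proper subfields of $K$, while checking that all the linear factors and all the $N_i$ remain units modulo $I$ --- which holds once $Q \neq I$, for then $Q \not\equiv 0 \pmod{I}$ and the congruent right-hand product, hence each of its factors, cannot vanish.
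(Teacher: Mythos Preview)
Your proposal is correct and follows essentially the same approach as the paper's sketch: factor $Q$ into irreducible quadratics over $\F_{q^{kd}}$, apply the degree~$2$ elimination~(\ref{eq:degree2}) to one factor, and then take the product over $\Gal(\F_{q^{kd}}/\F_{q^k})$ (the norm) to push the congruence back down to $\F_{q^k}[X]$. Your write-up is in fact more careful than the paper's sketch --- you justify the simple transitivity of the Galois action on the quadratic factors, the legitimacy of transporting the congruence through Frobenius because $I \in \F_{q^k}[X]$, and the multiplicity $d/e_i$ attached to each $N_i$ --- but the underlying strategy is identical.
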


We now sketch the proof. Over $\F_{q^{kd}}$, we have the factorisation $Q = \prod_{i=1}^{d} Q_i$, where $Q_i \in \F_{q^{kd}}[X]$
are irreducible quadratics. Applying the above rewriting assumption to any one of the elements~$Q_i$ gives an instance of the product 
in~(\ref{eq:degree2}), with $Q_i$ on the l.h.s.  Taking the norm map, i.e., the product of all conjugates of each term under $\text{Gal}(\F_{q^{kd}}/\F_{q^{k}})$,
produces the original $Q$ on the l.h.s.\ and for each term on the r.h.s., a $d_1$-th power of an irreducible polynomial 
in $\F_{q^k}[X]$ of degree $d_2$, where $d_1d_2 = d$, which is what the proposition states.

Now, if one begins with an irreducible polynomial $Q$ of degree $2^e$ with $e \ge 1$, then recursively applying Proposition~\ref{prop:buildingblock} allows
one to express $Q$ as a product of at most $(q+2)^e$ linear polynomials. Moreover, given $g \in \F_{q^{kn}}^{\times}$ and $h \in \langle g \rangle$ 
an element whose logarithm with respect to $g$ is to be computed, one can efficiently find an irreducible representative of $h$ mod $I$ of degree 
$2^e$, provided that $2^e > 4n$, by adding random multiples of $I$ to $h$ so that the degree of $h + rI$ is $2^e$~\cite[Lem.~3.3]{GKZ18}.

The logarithms of the linear elements -- the factor base -- remain to be computed, but this can be accomplished in polynomial time in various 
ways~\cite{GGMZ13a,Joux13b}, or can be obviated altogether by repeating the descent to linear elements as many times as the cardinality of the factor 
base~\cite{EngeGaudry}, but the latter is usually only of theoretical interest. Note that in practice the above descent method is not optimal as for 
high degree element elimination the classical methods are more efficient, since they produce far fewer descendants.

The crux of this approach, namely eliminating a degree two element as expressed in~(\ref{eq:degree2}), can be achieved in various ways originating 
with~\cite{GGMZ13a}, as follows. Let $a, b, c \in \F_{q^k}$ and consider the polynomial $X^{q+1} + a X^q + b X + c$.
Firstly, since $X^q \equiv h_0/h_1 \pmod I$ we have
\begin{equation}\label{eqn:basicrewrite}
X^{q+1} + a X^q + b X + c \equiv \tfrac 1 {h_1} \big( (X + a) h_0 + (b X + c) h_1 \big) \!\pmod I,
\end{equation}
with the numerator of the r.h.s.\ of~(\ref{eqn:basicrewrite}) of degree at most three. We would like to impose that this numerator is divisible by $Q$. Therefore, let $L_Q \in \F_{q^k}[X]^2$ be the lattice defined by
\[ L_Q \defeq \{( w_0, w_1) \in \F_{q^k}[X]^2 \mid w_0 h_0 + w_1 h_1 \equiv 0 \!\!\pmod{Q} \} . \]
In general, $L_Q$ has a basis of the form $(1,u_0X + u_1), (X,v_0X +v_1)$ with $u_i,v_i \in \F_{q^k}$ and thus in order for the r.h.s.\
of~(\ref{eqn:basicrewrite}) to be divisible by $Q$ we must choose $a,b,c$ such that 
\begin{equation}\label{eqn:abc}
(X + a, b X + c) = a (1, u_0 X + u_1) + (X, v_0 X + v_1) .
\end{equation}
When this is so, the cofactor of $Q$ has degree at most one.

Secondly, provided that $b \ne a^q$ and $c \ne a b$, the polynomial $X^{q+1} + a X^q + b X + c$ may be transformed by using the substitution
\begin{equation}\label{eqn:subst}
X \longmapsto \frac {a b - c} {b - a^q} X - a
\end{equation}
into a scalar multiple of the Bluher polynomial $X^{q+1} - B X + B$, where 
\begin{equation}\label{eqn:Bluhertransform}
B \defeq \frac {(b - a^q)^{q+1}} {(c - a b)^q}.
\end{equation}
Thus, if these two conditions on $a,b,c$ hold then $X^{q+1} + a X^q + b X + c$ splits whenever $B \in \mathcal{B}_k$. Combining~(\ref{eqn:abc})
and~(\ref{eqn:Bluhertransform}) yields the condition
\begin{equation}\label{Bluhermaineqn}
B = \frac{(-a^q + u_0 a + v_0)^{q+1}} {(-u_0 a^2 + (u_1 - v_0)a + v_1)^q}.
\end{equation}
In order to eliminate $Q$, one chooses $B \in \mathcal{B}_k$ to obtain a univariate polynomial in $a$, which can be checked for roots in $\smash{\F_{q^k}}$ 
in various ways, for instance by computing the GCD with $a^{q^k} - a$. Note that the more Bluher values there are, the higher the chance is of 
eliminating $Q$. By Theorem~\ref{thm:BluherCount} the hardest case is for $k = 3$ as there is only one such Bluher value, namely $1$, which is the 
case we address in this paper, using a backup elimination method \`a la~\cite{GGMZ13b}. By this rationale, in general when 
Proposition~\ref{prop:buildingblock} is applied, the elimination of degree two $Q_i$ over $\F_{q^{kd}}$ becomes `easier' as $d$ grows, although
the computational cost per Bluher value increases as the base field increases in size.


\section{Field Setup and Factors}\label{sec:setup}

Recall that the group in which we look at the discrete logarithm
problem is the (cyclic) unit group of the finite field
$\F_{2^{30750}}$, thus its group order is $N \defeq 2^{30750} - 1$.
We used the factorisation algorithm of Magma, which is assisted by the
`Cunningham tables' for Mersenne numbers, to obtain the~$58$ known
prime factors of~$N$ up to bit-length~$135$ (see Appendix~\ref{app:A}).%
\footnote{We remark that this partial factorisation is sufficient,
  since the expected running time of a generic discrete logarithm
  algorithm on a subgroup of order the largest listed prime already
  exceeds our total running time.}

Also recall that to represent the field $\F_{2^{30750}}$, we first let
$\F_{2^{30}} \defeq \F_2[t] = \F_2[T] / (T^{30} + T + 1)$ where
$t \defeq [T] \in \F_{2^{30}}$, and then, setting $\gamma \defeq t^3$,
we define the target field as the extension \[ \F_{2^{30750}} \,\defeq\,
  \F_{2^{30}}[x] \,=\, \F_{2^{30}}[X] / (X^{1025} + X + \gamma) \]
where $x \defeq [X] \in \F_{2^{30750}}$.%
\footnote{Note that this convenient field representation for the
  discrete logarithm problem can be assumed without loss of
  generality, as it is possible to efficiently map between arbitrary
  field representations given by irreducible polynomials,
  cf.~\cite{Lenstra1991}.}
The field element $g \defeq x + t^9$ is then a supposed generator
for the group, as $g^{N / p} \ne 1$ for the 58 listed prime factors
$p \mid N$.

The schedule and the running times of our computation of the discrete
logarithm of the target element $h_{\pi} \in \F_{2^{30750}}$ from the
introduction are summarised in Table~\ref{tab:rt}.  In the subsequent
sections we present the corresponding steps in more detail.

\begin{table}
  \begin{center}
    \caption{Overall schedule and running times.} \label{tab:rt}
    \begin{tabular}{lcr}
      Step & when & core hours \\\hline
      Relation Generation & 23 Feb 2016 & 1 \\
      Linear Algebra & 27 Feb 2016 - 24 Apr 2016 & \num{32498} \\
      Initial Split & 09 Sep 2016 - 20 Sep 2016 & \num{248140} \\
      Classical Descent & 13 Oct 2016 - 23 Jan 2019 & ~\num{17077836} \\
      Small Degree Descent~ & 27 Jan 2019 - 28 May 2019 & \num{8122744} \\\hline
      \emph{total running time} & & \num{25481219}
    \end{tabular}
  \end{center}
\end{table}


\section{Logarithms of Degree 1 Elements}\label{sec:degree1}

As usual this consists of relation generation followed by a linear algebra elimination.

\subsection{Relation Generation}

The relation generation step is based on the following.  As before,
let~$\F_q$ be a finite field, let $k \ge 3$ and
let $a, b, c \in \smash{\F_{q^k}}$.  Then over~$\smash{\F_{q^k}}$, the
polynomial $X^{q+1} + a X^q + b X + c$ splits if and only if its
affine transformation $X^{q+1} - B X + B$ splits, with~$B$ as given
in~(\ref{eqn:Bluhertransform}).  According to
Theorem~\ref{thm:BluherCount}, for random $a, b, c$ this occurs with
probability about $q^{-3}$, in which case the set of roots of
$X^{q+1} + a X^q + b X + c$ is, by the
transformation~(\ref{eqn:subst}), given by
\[ \big\{ \lambda z - a \mid z \in \F_{q^k} \text{ a root of
    $X^{q+1} - B X + B$} \big\}, \quad \text{where} \quad \lambda
  \defeq \frac {a b - c} {b - a^q}. \]
In our case we have $\smash{\F_{q^k} = \F_{2^{30}}}$ and regard all
linear polynomials $x + u$, for $u \in \F_{2^{30}}$, as the
factor base.  Since in the target field one has
\[ x^{1024} \,=\, \frac {x + \gamma} x, \]
one could use $q = 2^{10}$ and $k = 3$ for a fast relation generation.
However, in order to decrease the row-weight of the resulting matrix,
we prefer to choose $\tilde k = 6$ and $\tilde q = 2^5$, so that
$x^{\tilde q^2} = \frac {x + \gamma} x$.  The relation generation
then makes use of the identity
\[ (x^{\tilde q+1} + a x^{\tilde q} + b x + c)^{\tilde q}
  = \tfrac 1 x \big( (b^{\tilde q} + 1) x^{\tilde q+1} + \gamma x^{\tilde q}
  + (a^{\tilde q} + c^{\tilde q}) x + a^{\tilde q} \gamma \big) , \]
which provides a relation of weight~$66$ each time both sides split.

Letting again $q = 2^{10}$, the Galois group $\Gal(\F_{2^{30750}} / \F_q)$
of order~$\num{3075}$ acts on the factor base elements.  Indeed, we have
\[ (x + u)^q \,=\, \tfrac {u^q + 1} x \big( x + \tfrac {\gamma}
  {u^q + 1} \big), \] by which we can express all logarithms in
a Galois orbit, up to a small cofactor, in terms of the logarithms
of a single representative (and of~$x$).  This way we only need to
consider the $\num{349185}$ orbits as variables.  Within a running time of
$63$ minutes on an ordinary desktop computer the relation generation
was finished.

\subsection{Linear Algebra}

The relation generation produced a $\num{349195} \times \num{349184}$
matrix with~$66$ nonzero entries in each row, each being of the form
$\pm 2^e \in \Z / N \Z$ with some $e \in \Z / 30750 \Z$.  We solved
the linear algebra system for this matrix using the Lanczos
method~\cite{Lanczos,odlyzko}.  More precisely, with the intention to
speed up modular computations, we used the factorisation
\[ N \,=\, N_1 \cdot N_2 \cdot N_3 \,\defeq\, (2^{10250} - 1) \cdot
  (2^{10250} + 2^{5025} + 1) \cdot (2^{10250} - 2^{5025} + 1) \] and
computed a solution vector for each of three moduli $M_i \mid N_i$
having small cofactors.  This took~$\num{30393}$ core hours on single
computing nodes having~$16$ or~$24$ cores.

Then by the Chinese Remainder Theorem we combined the resulting
solutions with the logarithms modulo the~$38$ prime factors of~$N$ up
to bit-length~$43$, listed in Appendix~\ref{app:A}.  For the~$18$
larger ones of these moduli -- the smallest being~$\num{2252951}$ --
we computed the logarithms using the Lanczos algorithm as well.  For
the remaining~$20$ small factors, i.e., up to $\num{165313}$, we
simply generated the corresponding subgroups in the target field and
read off the discrete logarithms from the table.  This way, and making
use of the Galois orbits, we obtained the logarithms $\log(x + u)$ for
all factor base elements, where $u \in \F_{2^{30}}$.  The remaining
computations accounted for a total of~$\num{2105}$ core hours on a
single $16$-core computing node.


\section{Degree 2 Elimination Methods}\label{sec:degree2}

As it is the bottleneck, in order to have a feasible computation it is indispensable to have an extremely efficient degree 2 elimination
algorithm and implementation.
In this section we apply the sketch of degree 2 elimination given in~\S\ref{sec:GKZ} to our target field
and describe algorithms which are more efficient than those presented in~\cite{GGMZ13b,GKZ14a}.
Most of the techniques are more general and are presented in the setting of
an arbitrary finite field $\F_{q^k}$ and $x^{q+1}-x-\gamma=0$.
Let $Q = X^2 + q_1X + q_0 \in \F_{q^k}[X]$ be an arbitrary irreducible quadratic polynomial to
be eliminated, i.e., to be written as a product of linear elements.

\subsection{Direct Method}

Recall that $x^q = \frac {x + \gamma} x$, so that
\begin{align}\label{eqn:directmethod}
  x^{q+1} + a x^q + b x + c &= \tfrac 1 x \big( (b + 1) x^2 + (a + c + \gamma) x + \gamma a \big) \\
  \nonumber &= \frac {b + 1} x \bigg( x^2 + \frac {a + c + \gamma} {b + 1} x + \frac {\gamma a} {b + 1} \bigg) .
\end{align}
Since the denominator~$x$ on the r.h.s.\ of~(\ref{eqn:directmethod}) is in the factor base, we know its logarithm. There is also no lattice 
to consider as the quadratic term must be~$Q$, leading to the equations $q_1 = \frac {a + c + \gamma} {b+1}$ and $q_0 = \frac {\gamma a} {b+1}$. 
In order for the l.h.s.\ of~(\ref{eqn:directmethod}) to split over~$\F_{q^3}$, since there is only one Bluher value we must have 
$(b - a^q)^{q+1} = (c - a b)^q$. 
Writing $b = \frac 1 {q_0} (\gamma a - q_0)$ and $c = \frac 1 {q_0} (-q_0 a - \gamma q_0 + \gamma q_1 a)$ leads to a 
univariate polynomial equation in~$a$, namely
\[ (-q_0 a^q + \gamma a - q_0)^{q+1} - q_0 (-\gamma a^2 + q_1 \gamma a - \gamma q_0)^q = 0. \]
This equation may be solved by computing the GCD with $a^{q^3} - a$, by writing $a$ using a basis for $\F_{q^3}$ over $\F_q$ and solving 
the resulting quadratic system using a Gr\"obner basis computation~\cite{GGMZ13b}, or using the following much faster technique which exploits the 
fact that $B = 1$. 

Since $B = 1$ is the only Bluher value for $k = 3$, writing $b = \smash{\frac 1 {q_0}} (\gamma a - q_0) = u_0 a + v_0$ and 
$c = \smash{\frac 1 {q_0}} (-q_0 a - \gamma q_0 + \gamma q_1 a) = u_1 a + v_1$ as in~(\ref{eqn:abc}), the equation~(\ref{Bluhermaineqn}) becomes
\begin{equation}\label{eqn:elimination2direct}
(-a^q +u_0a+v_0)^{q+1}-(-u_0 a^2 + (u_1-v_0)a + v_1)^q=0.
\end{equation}
By setting $X_i=a^{q^i}$ we can rewrite this as $E=0$ with
\[ E \defeq (-X_1+u_0X_0+v_0)(-X_2+u_0^qX_1+v_0^q)
  - (-u_0^qX_1^2+(u_1^q-v_0^q)X_1+v_1^q) \]
where~$E$ is considered as a polynomial in the $X_i$.
Notice that the coefficient of $X_1^2$ vanishes, a consequence of $B=1$.
Raising~(\ref{eqn:elimination2direct}) to the power~$q$ while
substituting $X_i^q$ with $X_{i+1}$ (and $X_3$ with $X_0$)
yields the equation $\tilde E= 0 $ with
\[ \tilde E \defeq (-X_2+u_0^qX_1+v_0^q)(-X_0+u_0^{q^2}X_2+v_0^{q^2})
  - (-u_0^{q^2}X_2^2+(u_1^{q^2}-v_0^{q^2})X_2+v_1^{q^2}). \]
Since the factors $-X_2+u_0^qX_1+v_0^q$ coincide in $E$ and $\tilde E$
(essentially by construction) and $X_0$ occurs only in their cofactors, it
is easy to see that $E + u_0 \tilde E = \alpha_{12} X_1 X_2 + \alpha_1 X_1
+ \alpha_2 X_2 + \alpha$ with $\alpha_{*}$ being simple expressions in
$u_0, u_1, v_0, v_1$.  The special case $\alpha_{12} = 0$ can be handled
easily so we assume that $\alpha_{12} \ne 0$ and rewrite the equation
$\alpha_{12} X_1 X_2 + \alpha_1 X_1 + \alpha_2 X_2 + \alpha = 0$ as
\[ X_1 = -\frac {\alpha_2 X_2 + \alpha} {\alpha_{12} X_2 + \alpha_1} . \]
By raising this equation to the power~$q$ (with the usual substitution
of $X_i^q$ with $X_{i+1}$) one can express $X_2$ in terms of $X_0$,
and after doing this again $X_0$ in terms of $X_1$.  Substituting these
three equations into each other gives an equation of the form $X_0 =
\frac {\beta_2 X_0 + \beta} {\beta_{12} X_0 + \beta_1}$ or, equivalently,
\begin{equation}\label{eqn:degree2}
\beta_{12}X_0^2+(\beta_1-\beta_2)X_0-\beta=0.
\end{equation}
In the rare case that the l.h.s.\ of~(\ref{eqn:degree2}) is identically zero 
we abort the direct elimination method and apply the backup method; otherwise we check whether~(\ref{eqn:degree2}) has solutions in $\F_{2^{30}}$ and when it does so, we check whether they satisfy~(\ref{eqn:elimination2direct}). It is not easy to classify which solutions of~(\ref{eqn:degree2}) 
lead to solutions of~(\ref{eqn:elimination2direct}), although in practice almost all of them do so.

Experimentally we found that a randomly chosen~$Q$ is eliminable by this method with probability very 
close to $1/2$. A heuristic argument supporting this observation is that the derived equation~(\ref{eqn:degree2}) presumably behaves as a random degree~$2$ equation, which 
therefore has a solution in $\F_{2^{30}}$ with probability $\approx 1/2$. 

Before explaining the backup method we first introduce three efficient techniques that we employed for solving related problems.

\subsection{Finding a Degree 2 Elimination with Probability $q^{-2}$}\label{sec:qpowminus2}

A simple method for eliminating a degree~$2$ element $Q$ in $\F_{q^k}[X]$ consists of picking a random $a \in \F_{q^k}$, 
computing $b = u_0 a + v_0 = \smash{\frac 1 {q_0}} (\gamma a - q_0)$ and $c = u_1 a + v_1 = \smash{\frac 1 {q_0}} (-q_0 a - \gamma q_0 + \gamma q_1 a)$ as before,
and then checking whether the triple $(a,b,c)$ leads by~(\ref{eqn:Bluhertransform}) to a Bluher value $B \in {\mathcal B}_{k}$ using 
Theorem~\ref{thm:BluherRecurrence}, i.e., whether $P_k(\frac 1 B) = 0$ for the polynomials~$P_i$ defined by~(\ref{eqn:BluherRecurrence}). 
The success probability is expected to be about~$q^{-3}$ per trial by Theorem~\ref{thm:BluherCount}.

A better success rate is achieved by guessing a root~$r$ of the polynomial $X^{q+1} + a X^q + b X + c$ first (with~$b$ and~$c$ as
above) and testing if the polynomial splits completely. Note that when~$r$ is a root and we plug
in~$b$ and~$c$, we have $r^{q+1} + a r^q + (u_0 a + v_0) r +(u_1 a + v_1) = 0$ from which we recover
\[ a = - \frac {r^{q+1} + v_0 r + v_1} {r^q + u_0 r + u_1} \,. \]
From this we again compute $B = \frac {(b - a^q)^{q+1}} {(c - a b)^q} \in \F_{q^{k}}$ and check by
Theorem~\ref{thm:BluherRecurrence} whether $B \in {\mathcal B}_{k}$.
With this method the probability that the corresponding $B$ is a Bluher value has
been increased (heuristically) to about $q^{-2}$ by~\cite[Thm.~5.6]{bluher}.

The recurrence~(\ref{eqn:BluherRecurrence}) for computing
$P_k(\frac 1 B)$ as per Theorem~\ref{thm:BluherRecurrence} can be modified by
\begin{alignat*}{1}
&P^*_1 = 1 ,\ P^*_2 = b - a^q , \\
&P^*_i = (b - a^q)^{q^{i-2}} P^*_{i-1} - (c - a b)^{q^{i-2}} P^*_{i-2} ,
\end{alignat*}
where~$P^*_k$ provides (after clearing denominators) a polynomial~$P$
in~$r$ having the same roots as~$P_k(\frac 1 B)$, and which may be
used for the interpolation approach described next.

\subsection{Using Interpolation to Find Roots of Featured Polynomials}\label{sec:interpolation}

When encountering the task of finding a root in $\smash{\F_{q^k}}$ of a
polynomial~$P$ of high degree in one variable~$X$ which can be written
as a low degree polynomial in the variables $X, X^q, X^{q^2}, \dots$,
the method of picking random $r \in \F_{q^k}$ until $P(r) = 0$ can
sometimes be sped up as follows.

Let $r_0, r_1, \dots, r_{\ell} \in \smash{\F_{q^k}}$ such that
$r_1, \dots, r_{\ell}$ are linearly independent over $\F_q$ (in
particular $\ell \le k$) and let $\mathcal R \defeq \{ r_0 +
\sum_{i=1}^{\ell} c_i r_i \mid c_1, \dots, c_{\ell} \in \F_q \}$.
Then there is a low degree polynomial $\tilde P$ satisfying
$\tilde P(c_1, \dots, c_{\ell}) = P(r_0 + \sum_{i=1}^{\ell} c_i r_i)$;
let $D = \deg(\tilde P)$ and assume $D < q$.
The polynomial~$\tilde P$ can be computed by evaluating~$P$ at
$D + \ell \choose \ell$ elements of $\mathcal{R}$ and interpolating,
after which the~$q^{\ell}$ values $P(r)$ for $r \in \mathcal{R}$ can be
obtained by evaluating $\tilde P$.
Notice that still $q^{\ell}$ operations are needed to cover the set
$\mathcal{R}$ but if evaluating~$P$ is sufficiently more expensive
than evaluating $\tilde P$, this method is faster.
The optimal value of $\ell$ depends on $q$, $D$ and the evaluation costs for
the two polynomials.

\subsection{Using GCD Computations}

In certain situations we have to find an element in $\F_{q^k}$
which is a root of two polynomials~$P_1$ and~$P_2$.
In this case one can speed up the interpolation approach as follows.

Let $\tilde P_1$ and $\tilde P_2$ be the low degree polynomials
corresponding to $P_1$ and $P_2$ and let their degrees be
$D_1$ and $D_2$, respectively.
If a tuple $(c_1, \dots, c_{\ell})$ leads to a root of $P_1$ and $P_2$ then
$c_1$ is a root of $\tilde P_i(c_1, \dots, c_{\ell})$, $i=1,2$, considered
as a polynomial in the variable $c_1$.
Therefore $c_1$ is also a root of the greatest common divisor of these
two univariate polynomials.
If $q^k$ is much bigger than $D_1 D_2$, the degree of the GCD is usually
not bigger than~$1$ so that one trades~$q$ evaluations
of $\tilde P_1$ (and/or $\tilde P_2$) for one GCD computation.

\subsection{Backup Method}

When the direct method fails, which occurs with probability $\approx 1/2$, we use the following backup approach, based on the idea from~\cite{GGMZ13b}; for the remainder of this section we assume $\F_{q^k}=\F_{2^{30}}$.
Instead of using $q = 2^{10}$, $k= 3$ we use $q = 2^6$, $k = 5$, which by Theorem~\ref{thm:BluherCount} means there are $q^2 + 1 = 4097$ Bluher values,
but now the r.h.s.\ of the relevant equation will have higher degree and thus a smaller chance that the cofactor is $1$-smooth.

Let $y = x^{1024}$ and $\overline x = x^{16}$, so that $y = \overline x^{64}$. As $x^{1025} + x + \gamma = 0$ we have $x = \frac {\gamma} {y+1}$
and $\overline x = \smash{\big( \frac{\gamma}{y+1} \big)^{16}}$. We therefore have
\begin{align}\label{eqn:backup1}
  \nonumber \overline x^{65} + a \overline x^{64} + b \overline x + c
  &= y \big( \tfrac {\gamma} {y+1} \big)^{16} + a y + b \big( \tfrac {\gamma} {y+1} \big)^{16} + c \\
  &= \tfrac 1 {(y+1)^{16}} \big( a y^{17} + c y^{16} + (\gamma^{16} + a) y + (b \gamma^{16} + c) \big) .
\end{align}
Now let $\widetilde{Q}(X) \defeq (X+1)^2 Q\big(\frac{\gamma}{X+1}\big)$ so that $Q(x) = \frac{\widetilde{Q}(y)}{(y+1)^2}$, and consider the lattice
\[
L_{\widetilde{Q}} \defeq \big\{ (w_0, w_1) \in \F_{q^5}[X]^2 \mid w_0 + \tfrac {(X+1)^{16}} {\gamma^{16}} w_1 \equiv 0 \!\!\pmod{\widetilde{Q}} \big\}.
\]
In general, $L_{\widetilde{Q}}$ has a basis of the form $(X+u_0,u_1),(v_0,X+v_1)$ with $u_i,v_i \in \F_{q^5}$. Thus for $a \in \F_{q^5}$ we have
$(X + u_0 + av_0,aX +u_1 + av_1) \in L_{\widetilde{Q}}$. Substituting this element into $w_0,w_1$ and evaluating the resulting expression at $y$ 
leads to the r.h.s.\ of~(\ref{eqn:backup1}) being
\[
  \tfrac 1 {(y+1)^{16}} \big( a y^{17} + (u_1 + a v_1) y^{16} + (\gamma^{16} + a) y + \gamma^{16} (u_0 + a v_0) + u_1 + a v_1 \big)
\]
and thus $b = a v_0 + u_0$ and $c = a v_1 + u_1$. The l.h.s.\ of~(\ref{eqn:backup1}) transforms into a Bluher polynomial provided that 
$(a^{64} + b)^{65} = B (a b + c)^{64}$ for some $B \in \mathcal{B}_5$. This results in the equation
\begin{equation}\label{eqn:backupfora}
(a^{64} + v_0 a + u_0)^{65} + B(v_0 a^2 + (u_0 + v_1) a + u_1)^{64} = 0 \,.
\end{equation}
As before, $\F_{q^5}$-roots of~(\ref{eqn:backupfora}) can be computed, if they exist, via a GCD computation or using a basis for $\F_{q^5}$
over $\F_q$ and solving the resulting quadratic system using a Gr\"obner basis approach. 
Instead of solving~(\ref{eqn:backupfora}) directly, we used
the probability $q^{-2}$ method and interpolation with $\ell = 3$ so that
we expect to find about $q = 2^6$ completely splitting l.h.s.\ of~(\ref{eqn:backup1}) per interpolation.

For each of these we check whether the r.h.s.\ of~(\ref{eqn:backup1})
is $2$-smooth as follows.  Denote by~$R$ the polynomial corresponding
to its numerator and let~$m$ be a linear fractional transformation
over~$\F_{2^{60}}$ mapping the two roots of $\widetilde Q$ to~$0$
and~$\infty$.  Then the polynomial~$R$ becomes (up to a scalar)
$X^{16} - \alpha X$ for some $\alpha \in \F_{2^{60}}^{\times}$, and the
transformation~$\overline m$ maps~$R$ to $X^{16} - \overline \alpha X$,
where~$\overline \cdot$ denotes the $\F_{2^{60}} / \F_{2^{30}}$ Galois
conjugate, i.e., powering by $2^{30}$.  If and only if
$\alpha \in \F_{2^{60}}$ is a fifteenth power, does the r.h.s.\
of~(\ref{eqn:backup1}) split completely over $\F_{2^{60}}$ and is thus
$2$-smooth over $\F_{2^{30}}$.  As~$\widetilde Q$ is irreducible,
the transformation $\overline m^{-1} m$ exchanges~$0$ and~$\infty$
and is thus of the form $X \mapsto \frac \beta X$ for some $\beta \in
\F_{2^{60}}^{\times}$, while mapping $X^{16} - \overline \alpha X$ (up to a scalar)
to $X^{16} - \alpha X$ and hence $\alpha \overline \alpha = \beta^{15}$,
which implies that $\alpha = \smash{\alpha^{2+2^{30}}} \beta^{-15}$ is already
a third power.  Therefore, we expect the r.h.s.\ of~(\ref{eqn:backup1})
to be $2$-smooth with probability $\frac 1 5$, which is much higher
than for a random polynomial of this degree, thanks to it being
transformable to a Bluher polynomial.

Furthermore, whenever the r.h.s.\ is $2$-smooth it must factorise into a product of five linear polynomials and five degree two polynomials.
In particular, assume that the polynomial~$R$ on the r.h.s.\ of~(\ref{eqn:backup1}) splits completely over
$\F_{2^{60}}$.  Similarly as above, denote by~$m$ a linear fractional transformation mapping~$R$ to $X^{16} - X$ and the two roots of $\widetilde{Q}$ to 
$0$ and $\infty$.  Then the fifteen other roots of $R$ are $r_i=m(\zeta_{15}^i)$, $i = 0, \dots, 14$, with $\zeta_{15} \in \mu_{15}$ 
a fixed primitive fifteenth root of unity.  Notice that $\overline{\zeta_{15}}=\zeta_{15}^4$. 
In order to find the roots $r_i$ contained in the subfield $\F_{2^{30}}$
one has to solve $m(\zeta_{15}^i)=\overline{m(\zeta_{15}^i)}=\overline{m}(\zeta_{15}^{4i})$ or equivalently 
$\overline{m}^{-1} m(\zeta_{15}^i) = \zeta_{15}^{4i}$; 
let $n = \overline{m}^{-1} m$. Since~$n$ exchanges~$0$ and~$\infty$, and because $n \overline{n} = I_2$ holds, $n$~is actually a 
transformation of the form $X \mapsto \frac{b} X$ for some $b \in \F_{2^{30}}^{\times}$. Furthermore $n$~maps $X^{16} - X$ to 
itself which implies $n(\mu_{15}) \subset \mu_{15}$, hence
$b \in \mu_{15} \cap \F_{2^{30}}=\mu_3$. Therefore the equation $n(\zeta_{15}^i)=\zeta_{15}^{4i}$ 
from above becomes $\zeta_{15}^{5i} = b$ which has exactly five solutions. 
Thus the cofactor of $\widetilde{Q}$ splits into five polynomials of degree one and 
five of degree two. We remark that the same reasoning can be applied to determine the possible splitting pattern for a general Bluher 
polynomial in positive characteristic.

For each set of five degree two polynomials, we attempt to eliminate each of them using the direct method, which succeeds with probability $1/2^5$, assuming these eliminations are independent. One thus expects to eliminate such degree two elements after trying $160$ Bluher values, a figure which 
was borne out by our experiments. One also expects the approach to fail for a given irreducible of degree two with probability 
$(1 \!-\! \frac 1 {160})^{4097} \approx 7 \!\cdot\! 10^{-12}$.
If this happened, we simply restarted the elimination at an ancestor with a
different seed for randomness.
In terms of timings per degree two elimination, on average these ranged between $1.1$ms on some machines 
and just less than $1$ms on others.


\section{Small Even Degree Elimination Techniques}\label{sec:smalleven}

An overview of the various descent methods that we applied for the
smaller degrees is depicted in Figure~\ref{fig:small-degree}.  In this
short section we explain those methods for even degrees that are based on
degree~$2$ elimination, and in the next section we detail the methods
for odd degrees.

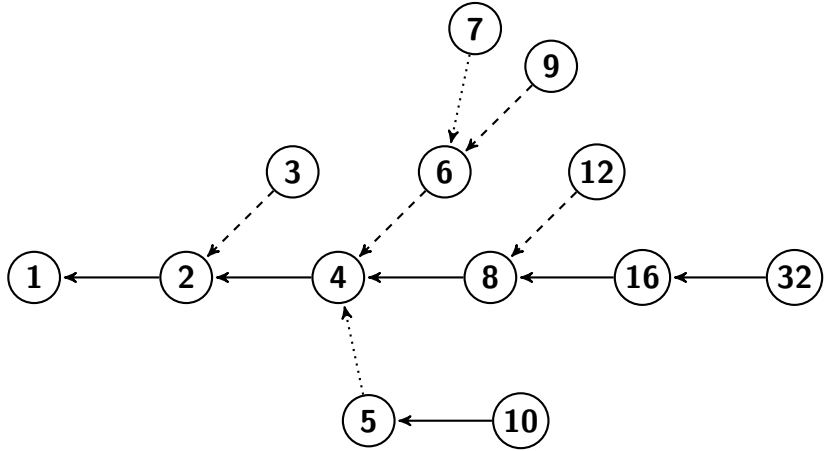
\begin{figure}[t]
  \begin{center}
    \begin{tikzpicture}[-stealth', shorten >=1pt, thick,
      every node/.style={circle, draw, font=\sffamily\large\bfseries}]
      
      \node (1) at (0, 0) { 1 };
      \node (2) at (2, 0) { 2 };
      \node (4) at (4, 0) { 4 };
      \node (8) at (6, 0) { 8 };
      \node (16) at (8, 0) { \!16\! };
      \node (32) at (10, 0) { \!32\! };
      
      \node (3) at (3.4, 1.4) { 3 };
      \node (6) at (5.4, 1.4) { 6 };
      \node (9) at (6.8, 2.8) { 9 };
      \node (12) at (7.4, 1.4) { \!12\! };
      
      \node (5) at (4.4, -1.9) { 5 };
      \node (7) at (5.8, 3.3) { 7 };
      \node (10) at (6.4, -1.9) { \!10\! };
      
      \draw (2) -- (1);
      \draw (4) -- (2);
      \draw (8) -- (4);
      \draw (16) -- (8);
      \draw (32) -- (16);
      \draw (10) -- (5);
      
      \draw[dashed] (3) -- (2);
      \draw[dashed] (6) -- (4);
      \draw[dashed] (9) -- (6);
      \draw[dashed] (12) -- (8);
      
      \draw[dotted] (5) -- (4);
      \draw[dotted] (7) -- (6);
    \end{tikzpicture}
  \end{center}
  \caption{Overview of the small-degree, `non-classical' descent methods.
    The encircled numbers represent the degrees of elements.  Solid
    arrows indicate two-to-one descents (including the backup strategy
    for $2 \to 1$), dashed arrows depict three-to-two descents, and
    dotted arrows represent specific odd-degree methods.}
  \label{fig:small-degree}
\end{figure}

\subsection{Degree 4 Elimination}

By Proposition \ref{prop:buildingblock} this case is reduced to
the elimination of a degree $2$ polynomial over $\F_{q^6}=\F_{2^{60}}$.
We use the probability $q^{-2}$ method and interpolation with $\ell = 1$
to obtain a polynomial $\tilde P$ of degree $15$ over $\F_{q^6}$ and
search for its zeroes in $\F_q$ as follows.
By choosing an $\F_q$ basis of $\F_{q^6}$ the polynomial $\tilde P$
can be expressed as a linear combination of six polynomials
$\tilde{P_1}, \dots, \tilde{P_6} \in \F_q[X]$ so that an $\F_q$-zero
of $\tilde P$ is also a zero of $\text{GCD}(\tilde{P_1}, \dots, \tilde{P_6})$.
In the rare case that the degree of this GCD is bigger than $1$ we
evaluate $\tilde P$ at all elements of $\F_q$.
Usually it is sufficient to compute the GCD of at most three of the
$\tilde{P_i}$.

\subsection{Degree~$2d$ to Degree~$d$}

For $2d \in \{8,10,16,32\}$ we rewrite an irreducible in $\F_{q^3}[X]$ of degree $2d$ into factors of degree (at most)~$d$, using the
algorithm outlined in the proof sketch of Proposition~\ref{prop:buildingblock}. This method is based on degree~$2$ elimination over the 
fields~$\F_{q^{3d}}$, which we perform using the technique already described in~\S\ref{sec:qpowminus2}, since the subsequent costs dominate 
the elimination costs by a large factor.


\section{Small Odd Degree Elimination Techniques}\label{sec:smallodd}

In this section we explain the methods employed for eliminating irreducible polynomials in $\F_{q^k}[X]$ of small odd degree~$d$ into polynomials of
degrees $d \!-\! 1, \dots, 1$.
In the same way that the degree $2$ elimination extends to any even degree
polynomial these techniques allow to eliminate irreducible polynomials of
degree $nd$ into polynomials of degrees $n (d \!-\! 1), \dots, n$.

\subsection{Degree 3 Elimination}

We show in this section how the degree~$2$ elimination technique can be extended to eliminate degree~$3$ polynomials. Instead of eliminating degree~$2$ polynomials into linear polynomials, we eliminate degree $3$ polynomials into degree~$2$ polynomials: it is a $3$--to--$2$ elimination.

As in~\S\ref{sec:GKZ} we use the field representation $\F_{q^{kn}} = \F_{q^k}[X] / (I)$, where~$I$ divides $h_1 X^q - h_0$.
We may assume that $h_0$ and $h_1$ have degree at most $1$, since in our setup $h_0 = X + \gamma$ and $h_1 = X$.
The $3$--to--$2$ elimination takes as input a cubic polynomial~$Q$ in $\F_{q^k}[X]$ and rewrites it into a product of linear and quadratic polynomials $Q_i \in \F_{q^k}[X]$ such that $Q \equiv \prod_i Q_i \bmod I$.

In our setup, the base case consists in eliminating cubic polynomials over $\F_{2^{30}}$ into quadratic and linear polynomials over $\F_{2^{30}}$, so $q = 2^{10}$ and $k = 3$.
As mentioned, this technique allows to eliminate irreducible polynomials of degree $3d$ over $\F_{2^{30}}$ into polynomials of degree $2d$ and $d$ over $\F_{2^{30}}$ in the same way that the degree $2$ elimination extends to any even degree polynomials.
Concretely, let $Q \in \F_{2^{30}}[X]$ be an irreducible polynomial of degree $3d$.  Then $Q$ splits in $\F_{2^{30d}}[X]$ into~$d$ Galois-conjugate irreducible factors, each of degree $3$.
Let $\hat Q$ be any of these factors. Let $N \colon \F_{2^{30d}}[X] \rightarrow \F_{2^{30}}[X]$ be the norm map. Then $Q = N(\hat Q)$. Applying the degree 3 elimination to $\hat Q$
yields a product $\prod_i \hat Q_i$ such that each $\hat Q_i$ is linear or quadratic in $\F_{2^{30d}}[X]$, and $\hat Q \equiv \prod_i \hat Q_i \mod I$.  Thus each $Q_i = N(\hat Q_i)$ is of degree
$d$ or $2d$ in $\F_{2^{30}}[X]$, and $Q \equiv \prod_i Q_i \bmod I$.

This method is used in our computation to eliminate polynomials of degree 6, 9 and 12.

\subsubsection{Degree 3 to Degree 2 Elimination}

Let $k \geq 3$.
We now show how to eliminate a cubic polynomial $Q$ over $\F_{q^k}$ into a product of linear and quadratic polynomials over $\F_{q^k}$.
As previously, let $\mathcal{B}_k$ be the set of all values $B \in \F_{q^k}$ such that $X^{q+1} - BX + B$ splits completely in $\F_{q^k}[X]$. Any polynomial of the form $X^{q+1}+aX^q + bX+c$ with $c \neq ab$ and $b\neq a^q$ splits completely in $\F_{q^k}[X]$ whenever $B = \frac{(b-a^q)^{q+1}}{(c-ab)^q}$ is in $\mathcal{B}_k$.
Consider the polynomial
\[ H_0(X, Y) = X Y + a Y + b X + c \in \F_{q^k}[X,Y] \,, \]
and define the transformation
\[ T_{\delta} \colon X \longmapsto \frac {X^2 + X + \delta} {X + 1} \,. \]
Let $H_{\delta}(X,Y) = H_0(T_{\delta}(X), T_{\delta^q}(Y))$.
Whenever $B = \frac{(b-a^q)^{q+1}}{(c-ab)^q} \in \mathcal{B}_k$, 
the numerator of $H_{\delta}(X,X^q)$ splits into linear and quadratic polynomials in $R_1 = \F_{q^k}[X]$. The denominator of $H_\delta$ equals $(X+1)(Y+1)$, and the polynomial $(X+1)(Y+1)H_\delta$ is of degree 2 in $Y$. Therefore the image of $(X+1)(Y+1)H_\delta$ in $R_2 = \F_{q^k}[X][1/h_1]$ has denominator $h_1^2$, so to get rid of all the denominators, we will work with the polynomial
\[ G_\delta = h_1^2(X + 1) (Y + 1) H_\delta \,. \]
On one hand, we want $G_\delta$ to split into linear and quadratic polynomials in $R_1$, i.e., $B$ should be an element of $\mathcal{B}_k$. On the other hand, we want $Q$ to divide the image of $G_\delta$ in $R_2$. The latter is of degree at most $2 + 2 \max(\deg h_0, \deg h_1)$. When $h_0$ and $h_1$ are of degree 1, and $Q$ divides the image of $G_\delta$ in $R_2$, the cofactor is of degree at most 1.

We shall now describe how to find suitable values of $\delta, a, b$ and $c$ in $\F_{q^k}$. First, observe that~$G_\delta$ is a linear combination of the polynomials
\begin{alignat*}{1}
I_0 &= h_1^2(X+1)(Y+1) T_\delta(X) T_{\delta^q}(Y),\\
J_0 &= h_1^2(X+1)(Y+1) T_{\delta^q}(Y),\\
K_0 &= h_1^2(X+1)(Y+1) T_\delta(X),\\
L_0 &= h_1^2(X+1)(Y+1),
\end{alignat*}
as $G_\delta = I_0 + aJ_0 + bK_0 + cL_0$. Let $I,J,K$ and $L$ be the reductions modulo $Q$ of the images in $R_2$ of $I_0, J_0, K_0$ and $L_0$ respectively; in particular, these are polynomials in $R_2$ of degrees at most~2. Then, $Q$ divides $G_\delta$ in $R_2$ if and only if $I + aJ + bK + cL = 0$. The latter is a linear system for the three variables $a,b$ and $c$, in the 3-dimensional vector space of polynomials of degree 2. Solving this linear system with $\delta$ as a symbolic variable yields a solution $a,b,c \in \F_{q^k}[\delta]$.
Choosing random values for $\delta$, the resulting $B$ is expected to fall in $\mathcal{B}_k$ with an expected probability of~$q^{-3}$. For $k = 3$, there is only one element in $\mathcal{B}_k$, so it might happen that no good value of $\delta$ exists. In that case, we start again with the transformation
\[ T_{\delta, \alpha} \colon X \longmapsto \frac {X^2 + X + \delta} {X + \alpha} \,, \]
in place of $T_{\delta}$, for random values $\alpha \in \F_{q^k}$.

\subsubsection{Optimising the Elimination}

Suppose we have computed $a, b, c \in \F_{q^k}[\delta]$, and are looking for values of~$\delta$ that give rise to a Bluher value.
Instead of trying random values for $\delta$, one can directly find these which yield a~$B$ in $\mathcal B_k$ via the characteristic polynomial of inverse Bluher values as per Theorem~\ref{thm:BluherRecurrence}.
By using the recurrence in~\S\ref{sec:qpowminus2} and interpolation (cf.~\S\ref{sec:interpolation}, in our practical setting we use $\ell = 1$) the random guessing approach could be sped up considerably.

\subsection{Eliminating Polynomials of Degree 5 and Degree 7}

Let~$Q$ be a polynomial of degree~$d$.
Consider two polynomials $F = \sum_{i=0}^{d_F} f_i X^i$ and
$G = \sum_{i=0}^{d_G} g_i X^i$ in $\F_{q^k}[X]$ with $d_F, d_G < d$.
We have
\[ F^qG-FG^q \,\equiv\, \frac{1} {h_1^{d-1}} \Big( F^{(q)}
  \big( \tfrac{h_0} {h_1} \big) h_1^{d-1} G - F G^{(q)}
  \big( \tfrac{h_0} {h_1} \big) h_1^{d-1} \Big) \pmod{I}, \]
with the numerator of the r.h.s.\ of degree at most $2 d \!-\! 2$.
The coefficients of this numerator are simple polynomials in the $f_i$,
$f_i^q$, $g_i$ and $g_i^q$, in particular, they are linear in each of these
variables.
If the polynomial~$G$ is fixed, it is easy to find all polynomials~$F$ such
that the r.h.s.\ is zero modulo~$Q$.
Indeed, for an $\F_q$-basis $(\alpha_i)$ of $\F_{q^k}$ one can write
$f_i = \sum_{j=1}^k f_{ij} \alpha_j$ with $f_{ij} \in \F_q$ (implying
$f_i^q = \sum_{j=1}^k f_{ij} \alpha_j^q$) and reducing the r.h.s.\ modulo $Q$
to obtain a linear system of $d k$ equations in the $d k$ unknowns $f_{ij}$.
Since $F=G$ is always a solution, the corresponding matrix has rank at
most $d k \!-\! 1$; if its rank is smaller, a non-trivial pair $(F, G)$ exists
which leads to an elimination of $Q$ into polynomials of degrees smaller
than~$d$.

Heuristic arguments suggest that the probability of finding a non-trivial
pair $(F, G)$ for a randomly chosen~$G$ is about $q^{-2}$.
Namely, discarding the $1 + (q \!+\! 1) (q^{d k} \!-\! 1)$ trivial solutions
$\{ (F, G) \mid c_F F = c_G G \ \text{for some} \ c_F, c_G \in \F_q \}$, one
expects that about a $q^{-d k}$ part of the roughly~$q^{2 d k}$ remaining
pairs $(F, G)$ give rise to a r.h.s.\ divisible by~$Q$.
Since one non-trivial solution $(F, G)$ entails $q^2 \!-\! q$ non-trivial
solutions $(a F + b G, G)$, with $a, b \in \F_q$, $a \ne 0$, the
probability of finding a non-trivial solution for a fixed~$G$ is
heuristically about $q^{-2}$.

The cost of the algorithm as presented is $O(q^2 (d k)^3)$ (with a
lower exponent if fast matrix multiplication is used; since the
subsequent costs of the elimination dominate, we did not do this).
If~$q$ is big compared to $d k$, the following variants might be
advantageous.  If the rank of the matrix corresponding to the linear
system obtained from a randomly chosen~$G$ has rank at most
$d k \!-\! 2$ then all its minors vanish.
Thus, instead of directly computing the rank of the matrix, one can
check whether a few minors vanish and, if so, compute the rank.
Since the minors are polynomials of degree $d k \!-\! 1$ in the
$g_{ij} \in \F_q$ defined by $g_i = \sum_{j=1}^k g_{ij} \alpha_j$, the
interpolation approach as well as the GCD approach may be used.
For an $\ell = 1$ interpolation the cost is $O(q (dk)^4)$ and for
$\ell = 2$ it is $O((dk)^5 \!+\! q(dk)^2)$.

In our computation we used this method for degrees~$5$ and~$7$ for
which the $\ell = 2$ interpolation was optimal ($q = 1024$ and
$d k = 15, 21$).  We remark that the subsequent costs for this method
are much bigger than for the Gr\"obner basis method, but this was not
a significant concern as the resulting costs for these degrees were
relatively inexpensive.


\section{Computing Roots of Bluher Polynomials and Their Transformations}\label{sec:BluherRoots}

In order to find all roots of a Bluher polynomial or a transformation
thereof, which is known to split completely, we use two different methods.

If a transformation is known which maps the polynomial to an explicit
polynomial with known roots, this transformation is used to obtain the
roots. This applies to the following cases:
\begin{itemize}
\item the direct method in degree~$2$ and degree~$3$ elimination,
  where we transform to the only Bluher polynomial $X^{q+1}-X+1$ and
  use its pre-computed roots;
\item the backup method in degree~$2$ elimination for which the
  r.h.s.\ polynomial can be transformed to $X^{16} - \beta^{15} X$;
\item and the degree $n d$ to $n (d \!-\! 1), \dots, n$ elimination
  where the splitting of the polynomial is obvious.
\end{itemize}

In the other cases of degree $2^t$ elimination, one root of the
polynomial is known so that by sending this root to~$\infty$ we have
to find the roots of a polynomial $\tilde P = X^q + a_1 X + a_0$.
The case $a_1 = 0$ is trivial so we assume that $\tilde P$ has no
multiple roots.  Let $r_0, r_1 \in \smash{\F_{q^k}}$ be random elements
and compute $s_1 X + s_0 \equiv \sum_{i=0}^{k-1} (r_1 X + r_0)^{q^i} \pmod
{\tilde P}$.  By construction $(s_1 X + s_0)^q \equiv s_1 X + s_0
\pmod{\tilde P}$ holds so that $\tilde P$ divides $(s_1 X + s_0)^q
- (s_1 X + s_0)$.  If $s_1$ does not vanish, the roots of $\tilde P$
are $\frac {\beta - s_0} {s_1}$, $\beta \in \F_q$; otherwise one tries
other random pairs $(r_0, r_1)$.

Finally, for the degree $6$, $9$, $10$ and $12$ eliminations, the
polynomials are also a transformation of a Bluher polynomial. Three
roots of the Bluher polynomial are found with the Cantor--Zassenhaus
algorithm, from which all the roots of the original polynomial are
recovered.


\section{Classical Descent}\label{sec:classical}

As stated in the introduction, we set ourselves a discrete logarithm
challenge using the digit expansion (to the base~$2^{30}$) of the
mathematical constant~$\pi$, which is viewed, as is common, as a
source of pseudorandomness.  Specifically, the target element
$h_{\pi} \in \F_{2^{30750}}$ is represented by a polynomial
over~$\F_{2^{30}}$ of degree~$\num{1024}$.

In the individual logarithm phase, or the `descent' phase, of the
index calculus method, we seek to rewrite this polynomial, modulo
$X^{1025} + X + \gamma$, step-by-step as a product of polynomials
having smaller degrees, until we have expressed the target element as
a product of factor base elements.

\subsection{Descent Analysis}

Performing a descent starting from degree~$\num{1024}$ constitutes a
substantial challenge.  In order to optimise the overall computational
cost we employ a bottom-up approach in our analysis.  This means that
for every degree $d = 2, 3, 4, \dots$ we determine the expected cost
of computing a logarithm of an element represented by a polynomial of
degree~$d$.  This cost is the sum of finding a (good) representation
as a product of lower degree polynomials (the `direct cost'), plus the
expected total cost of the factors appearing in the product, based on
the earlier cost estimations for degrees~$<d$ (the `subsequent cost').

When computing the expected direct and subsequent costs, one needs
statistical information about the degree pattern distribution and the
resulting cost distribution for a random polynomial over~$\F_{2^{30}}$
of a given degree.  These can be obtained using formulas for the
number of irreducible polynomials and an approach involving formal
power series.  As the analysis becomes more difficult when the degrees
increase, we round the input costs appropriately and sometimes replace
the field~$\F_{2^{30}}$ by the field~$\F_2$ (without much loss of
accuracy).  Assisted by Magma for the power series computations, we
list in Table~\ref{tab:cost} the expected costs for each of the
degrees up to~$40$.%
\footnote{In common descent algorithms, a smaller degree usually means
  a lower cost, which however is not generally true in our case.  So
  in fact, our bottom-up analysis starts with the degrees for which the
  non-classical descent methods apply, cf.~Fig.~\ref{fig:small-degree}.}

\begin{table}
  \begin{center}
    \caption{Rounded cost estimations used in the descent phase.  The cost
      unit is normalised to the equivalent of one degree 16 elimination.}
    \label{tab:cost} \small
    \begin{tabular}{ccl}
      deg & cost & method and degrees \\\hline
      2 & 0 & two-to-one: $\leadsto$ 1 \\
      3 & 0 & three-to-two: $\leadsto$ 2, 1 \\
      4 & 0 & two-to-one: $\leadsto$ 2 \\
      5 & 0 & five-to-four: $\leadsto$ 4, 3, 2, 1 \\
      6 & 0 & three-to-two $\leadsto$ 4, 2 \\
      7 & 0 & seven-to-six: $\leadsto$ 6, 5, 4, 3, 2, 1 \\
      8 & 0 & two-to-one: $\leadsto$ 4 \\
      9 & 0.3 & three-to-two: $\leadsto$ 6, 3 \\
      10 & 0.3 & two-to-one: $\leadsto$ 5 \\
      11 & 10 & classical: $\leadsto$ 86, 70 \\
      12 & 0.5 & three-to-two: $\leadsto$ 8, 4 \\
      13 & 44 & classical: $\leadsto$ 100, 71 \\
      14 & 120 & classical: $\leadsto$ 99, 71, or 115, 72 \\
      15 & 160 & classical: $\leadsto$ 114, 72 \\
      16 & 1 & two-to-one: $\leadsto$ 8 \\
      17, 18 & 250 & classical: $\leadsto$ 73, 120 \\
      19, 20 & 360 & classical: $\leadsto$ 81, 119 \\
      21, 22 & 480 & classical: $\leadsto$ 89, 118 \\
      23, 24 & 630 & classical: $\leadsto$ 97, 117 \\
      25, 26 & 790 & classical: $\leadsto$ 105, 116 \\
      27, 28 & 960 & classical: $\leadsto$ 113, 115 \\
      29, 30 & \num{1150} & classical: $\leadsto$ 121, 114 \\
      31 & \num{1350} & classical: $\leadsto$ 129, 113 \\
      32 & \num{1000} & two-to-one: $\leadsto$ 16 \\
      33, 34 & \num{1580} & classical: $\leadsto$ 137, 112 \\
      35, 36 & \num{1830} & classical: $\leadsto$ 145, 111 \\
      37, 38 & \num{2000} & classical: $\leadsto$ 116, 147 \\
      39, 40 & ~\num{2300}~ & classical: $\leadsto$ 122, 148 \\
    \end{tabular}
  \end{center}
\end{table}

\subsection{Initial Split and Classical Descent}

Having finished the bottom-up analysis, starting from lowest degrees,
the actual descent computation was performed top-down, starting with
the target element of degree~$\num{1024}$.

In the first phase, referred to as `initial split', the target
element~$\beta$ is rewritten as a fraction
\[ g^i h_{\pi} = r(x) / s(x) , \] where
$\deg r + \deg s = \num{1024}$ and the integer~$i$ should be chosen so that
the polynomials $r, s \in \F_{2^{30}}[X]$ have a favourable factorisation
pattern.  The descent analysis suggested that a split with unbalanced
degrees is preferable, so we searched for splittings with
$\deg r = 205$ and with $\deg r = 256$.  The most promising initial
split we found was using $\deg r = 205$, $\deg s = 819$ and where
$i = \num{47611005802}$, resulting in a factorisation with largest
irreducible factor being of degree~$672$.

Next the `classical descent' was performed, which is based on the
following rewriting method.  Choose some $a \in \{ 0, 1, \dots, 10 \}$
and define $\overline x \defeq x^{2^{10 - a}}$ and $y \defeq \overline
x^{2^a}$, so that $\overline x = (\gamma / (y \!+\! 1))^{2^{10 - a}}$.
Then for polynomials $u, v \in \F_{2^{30}}[X]$ we have
\[ u(\overline x^{2^a}) \overline x + v(\overline x^{2^a})
  = u(y) \big( \tfrac {\gamma} {y + 1} \big)^{2^{10 - a}} + v(y) . \]
If a target field element represented by a polynomial
$Q \in \F_{2^{30}}[X]$ of degree~$d$ is to be eliminated, we choose
the polynomials $u, v$ such that~$Q$ divides one of the sides.  Now
balancing the degrees such that $\deg u + \deg v = d + e$ (for
$e \in \{ 0, 1 \}$) gives $2^{30 (e + 1)}$ trials for obtaining a
smooth cofactor and a smooth other side.

We refer to Table~\ref{tab:cd} for computational details of the
initial split and the classical descent.  This phase resulted in a
number of polynomials to be eliminated by the non-classical descent
methods, the timings of which are summarised in
Table~\ref{tab:clever}.

\begin{table}
  \begin{center}
    \caption{Initial split and classical descent: schedule and
      running times.} \label{tab:cd} \small
    \begin{tabular}{ccrrrr}
      is & when & degree & \,\#polys\, & \,iterations\, & core hours \\\hline
         & 09 Sep - 20 Sep 2016 & 1024 & 1 & 60 G & \num{248140}
    \end{tabular} \medskip\par
    \begin{tabular}{ccrrrr}
      cd & when & degree & \,\#polys\, & \,iterations\, & core hours \\\hline
  & 13 Oct - 30 Oct 2016 & 672 & 1 & 80 G & \num{220406} \\
       & 31 Oct - 02 Dec & 466 & 1 & 80 G & \num{395216} \\
       & 08 Dec - 03 Jan & 130 & 1 & 80 G & \num{121988} \\
       & 17 Dec - 26 Dec & 106 & 1 & 40 G & \num{120821} \\
  & 14 Jan - 22 Jan 2017 & 64 & 1 & 40 G & \num{85721} \\
       & 22 Jan - 28 Jan & 45 & 1 & 26 G & \num{52127} \\
       & 28 Jan - 01 Feb & 40 & 1 & 32 G & \num{56721} \\
       & 01 Feb - 02 Feb & 38 & 1 & 16 G & \num{26620} \\
       & 03 Feb - 04 Feb & 35 & 1 & 16 G & \num{23298} \\
       & 04 Feb - 07 Feb & 34 & 3 & 48 G & \num{69144} \\
       & 08 Feb - 11 Feb & 31 & 3 & 48 G & \num{70381} \\
       & 11 Feb - 14 Feb & 30 & 3 & 96 G & \num{143208} \\
       & 18 Feb - 21 Feb & 29 & 2 & 32 G & \num{47987} \\
       & 22 Feb - 04 Mar & 28 & 4 & 64 G & \num{96933} \\
       & 05 Mar - 08 Mar & 27 & 3 & 48 G & \num{72982} \\
       & 10 Mar - 13 Mar & 26 & 2 & 32 G & \num{38565} \\
       & 14 Mar - 18 Mar & 25 & 4 & 40 G & \num{43820} \\
       & 18 Mar - 26 Mar & 24 & 8 & 100 G & \num{110932} \\
       & 29 Mar - 01 Apr & 23 & 5 & 60 G & \num{59165} \\
       & 01 Apr - 09 Apr & 22 & 9 & 108 G & \num{108868} \\
       & 11 Apr - 24 Apr & 21 & 12 & 173 G & \num{153994} \\
       & 24 Apr - 08 May & 20 & 12 & 216 G & \num{192121} \\
       & 12 May - 30 May & 19 & 19 & 486 G & \num{347169} \\
       & 31 May - 09 Jul & 18 & 43 & \num{1143} G & \num{788765} \\
       & 19 Jul - 02 Aug & 17 & 39 & \num{879} G & \num{588148} \\
       & 05 Aug - 11 Oct & 15 & 116 & \num{2761} G & \num{1780136} \\
  & 14 Oct - 19 May 2018 & 14 & 209 & \num{6558} G & \num{4049638} \\
       & 20 May - 10 Sep & 13 & 592 & \num{6961} G & \num{4361593} \\
  & 10 Sep - 23 Jan 2019 & 11 & 1687 & \num{4647} G & \num{2851369} \\\hline
      & \emph{total running time} & & & & \num{17077836}
    \end{tabular}
  \end{center}
\end{table}
      
\begin{table}
  \begin{center}
    \caption{Timings of the non-classical descent methods.}
    \label{tab:clever} \small
    \begin{tabular}{rrr}
       degree & ~\#polys~ & core hours \\\hline
       32 & 8 & \num{2800053} \\
       16 & \num{8622} & \num{2944153} \\
       12 & \num{5644} & \num{976533} \\
       10 & \num{4694} & \num{790316} \\
       9 & \num{4184} & \num{372762} \\
       8 & \num{3998} & \num{1388} \\
       7 & \num{3671} & \num{236195} \\
       6 & \num{3467} & 615 \\
       5 & \num{3384} & 719 \\
       3 & \num{3744} & 8 \\
       0,1,2,4 & \num{22973} & 2 \\\hline
      \emph{total} & & \num{8122744}
    \end{tabular}
  \end{center}
\end{table}


\section{Reflections on the Security of the Proposal of Canteaut~\ea}\label{sec:canteaut}

One question which has a direct bearing on the security of the secure compressed encryption scheme of Canteaut \emph{et al.}~\cite{Canteaut}, is what is 
the complexity of solving discrete logarithms in the fields $\F_{2^{n}}$ with $n$ prime and $\approx \num{16000}$? 

In fact $n \approx \num{10322} > 2^{80/6}$ would suffice for their security condition, which is that the first stage of index calculus -- 
which has complexity $O((2^{\log_2 n})^6)$ when using the approach of Joux-Pierrot -- should have $80$-bit security. Firstly, one can reduce this 
slightly by using a degree $n$ irreducible factor of $h_1(X^q) X - h_0(X^q)$ with $\text{deg}(h_1) = 2$ and $\text{deg}(h_0) = 1$, allowing 
$q=2^{13}$ to be used rather than the stipulated $2^{14}$. Secondly, 
ignoring the descent is an overly conservative approach as one is then effectively assuming that logarithms can be computed in polynomial time, which 
partly explains why the performance of the scheme is so slow. Thirdly, one can instead use $q = 2^{12}$ and $h_1 \defeq X^3 + X^2 + X + 1$, 
$h_0 \defeq X^5 + X^4 + X^2$ as $h_1(X^q) X - h_0(X^q)$ has an irreducible factor of degree $\num{10333}$, 
which is the next prime after $\num{10322}$. For such 
$h_1,h_0$, the analysis of Joux-Pierrot does not apply in its entirety, but the alternative used by Kleinjung for the computation of discrete logarithms 
in $\F_{2^{1279}}$~\cite{1279Ann} gives an $O(q^7)$ algorithm for computing the logarithms of all irreducible elements of $\F_{q^{10333}}$ of 
degree $\le 4$.

Given that a target discrete logarithm is in $\F_{2^{10333}}$, any irreducible elements of degree~$d$ obtained during the descent can be split into some
degree $d / \text{GCD}(d, 12)$ irreducibles over $\F_q$ (or a subfield).  Hence for the initial split one can expect to obtain only elements of degree,
say $< 200$, in a reasonable time.  Furthermore, since the logarithms of elements of degree up to~$4$ are known, $d / \text{GCD}(d, 12)$ need only be a small power
of~$2$ times $1$, $2$, $3$ or $4$ in order to apply the GKZ step, so that, e.g., $d = 288$, $384$, $576$ or $768$ are also favourable degrees to search for during the initial split.
Note that since the degrees of $h_i$ are larger than in the present case, finding cofactors that are $1$-smooth will be more costly for each elimination. 
However, whether an individual logarithm can be computed in this field more efficiently than the factor base logarithms would require a detailed model and 
analysis of both parts, and we leave this as an open, but not particularly pressing question.

More generally, finding the optimal field representation, factor base logarithm method, and descent strategy for any given extension degree is an interesting open problem, with the understanding that optimal here means subject to the current state of the art.


\section{Concluding Remarks}\label{sec:conclusion}

We have presented the first large-scale experiment which makes essential recursive use of the elimination step of the GKZ quasi-polynomial algorithm and 
in doing so have set a new discrete logarithm record, in the field $\F_{2^{30750}}$. We have contributed novel algorithmic and arithmetic improvements 
to degree two elimination -- the core of the GKZ algorithm -- as well as to other small degree eliminations, which together made the computation feasible. 

Regarding open problems, the remaining major open problem in fixed characteristic discrete logarithm research is whether or not there is a polynomial time 
algorithm, either rigorous or heuristic. Since we now have $L(o(1))$ algorithms, one might hope that $L(0)$ complexity is not only possible, but may be discovered in the not too distant future. 


\section*{Acknowledgements}

The authors would like to thank EPFL's Scientific IT and Application Support (SCITAS) as well as the School of Computer and
Communication Sciences (IC) for providing computing resources and support.
The first and fourth listed authors were supported by the Swiss National Science Foundation via grant number 200021-156420.


\begin{subappendices}
\renewcommand{\thesection}{\Alph{section}}
  
\section{}\label{app:A}

This Magma script displays the partial factorisation of the group
order $N = 2^{30750} \!-\! 1$ by listing the known prime factors of
bit-length up to~$135$, in increasing order.

{\scriptsize
\begin{verbatim}
M := [
   < 1, 1.5850, 3^2 >,                                   < 21, 21.103, 2252951 >,      
   < 2, 2.8074, 7 >,                                     < 22, 21.488, 2940521 >,      
   < 3, 3.4594, 11 >,                                    < 23, 21.862, 3813001 >,      
   < 4, 4.9542, 31 >,                                    < 24, 21.890, 3887047 >,      
   < 5, 6.3750, 83 >,                                    < 25, 21.945, 4036451 >,      
   < 6, 7.2384, 151 >,                                   < 26, 23.333, 10567201 >,     
   < 7, 7.9715, 251 >,                                   < 27, 27.091, 142958801 >,    
   < 8, 8.3707, 331 >,                                   < 28, 27.294, 164511353 >,    
   < 9, 9.2312, 601 >,                                   < 29, 27.775, 229668251 >,    
  < 10, 9.5294, 739 >,                                   < 30, 31.188, 2446716001 >,   
  < 11, 9.5527, 751 >,                                   < 31, 31.342, 2721217151 >,   
  < 12, 10.266, 1231 >,                                  < 32, 33.040, 8831418697 >,   
  < 13, 10.815, 1801 >,                                  < 33, 36.030, 70171342151 >,  
  < 14, 11.136, 2251 >,                                  < 34, 36.276, 83209081801 >,  
  < 15, 11.984, 4051 >,                                  < 35, 37.969, 269089806001 >, 
  < 16, 12.587, 6151 >,                                  < 36, 39.774, 940217504251 >, 
  < 17, 13.706, 13367 >,                                 < 37, 40.044, 1133836730401 >,
  < 18, 15.586, 49201 >,                                 < 38, 42.576, 6554658923851 >,
  < 19, 16.621, 100801 >,
  < 20, 17.335, 165313 >,

  < 39, 50.143, 1243595348645401 >,
  < 40, 53.551, 13194317913029593 >,
  < 41, 55.544, 52546634194528801 >,
  < 42, 56.364, 92757531554705041 >,
  < 43, 57.302, 177722253954175633 >,
  < 44, 62.031, 4710883168879506001 >,
  < 45, 70.172, 1330118582061732221401 >,
  < 46, 70.838, 2110663691901109218751 >,
  < 47, 72.225, 5519485418336288303251 >,
  < 48, 74.080, 19963778429046466946251 >,
  < 49, 76.045, 77939577667619953038001 >,
  < 50, 81.503, 3427007094604641668368081 >,
  < 51, 81.757, 4086509101824283902341251 >,
  < 52, 93.333, 12477521332302115738661504201 >,
  < 53, 101.53, 3655725065508797181674078959681 >,
  < 54, 104.02, 20518738199679805487121435835001 >,
  < 55, 112.04, 5346075695594340248521086884817251 >,
  < 56, 114.78, 35758633131596900685051378954141001 >,
  < 57, 118.65, 519724488223771351357906674152638351 >,
  < 58, 134.30, 26815123266670488105926669652266381711401 > 
];
\end{verbatim}}


\section{}\label{app:B}

The following Magma script verifies the solution of the chosen DLP.

{\scriptsize
\begin{verbatim}
F2 := GF(2);
F2T<T> := PolynomialRing(F2);
F2_30<t> := ext< F2 | T^30 + T + 1 >;
F2_30X<X> := PolynomialRing(F2_30);
Xmod := X^1025 + X + t^3;
Fqx<x> := ext<F2_30 | Xmod >;

g := x + t^9;

pi := Pi(RealField(10000));
hpi := &+[ (Floor(pi * 2^(e+1)) mod 2) * t^(29-(e mod 30)) * x^(e div 30): e in [0..30749]];

log := 434390305789220128646032802928291982609103161559747835324845605488213355998660010229578378323680044\
0571584572618313992806371868198456791470780901419468919256264610453977016300076786167025822973048136833959\
0878467665466064152674655005652391637172013760770875176099662721206898645595263358736804087462977533382123\
2315131302891705001977748686889293319050071302409681861799652568296307597232351849866431934746650875959245\
0602671902187767401641781122604884838159314317487374378474672999763614677474730173878428493505419425791684\
6080161797788800050724547671213175949346909556874373705952104416698537444077761851591676796872323549138052\
3860665679424853707376856018873612183306770743192368627402259495347102152380263014405374835550410654239197\
2667845361799733363853947533351689470293301908054825769739572700652663971847149309353094674466286132575048\
6187657364509285871639479941370825143070270437660054019341835416260494645090240963521011103532544777046509\
1536191332904005087393896035530112997428238902910085725829585776641211605578600005263072054373393644877392\
6092004218334931938197619823713312568419369762209936082131923515369320441227699207625633325372043664307456\
4913221288187928934798264321289556576125416743751949748194132954432802514401212656718856068337071850239970\
2404331435381835036334725327407131866871748723856409267697576442746918982975062281036365566776392363298081\
2441781745674206200681802792267540028884393074308326228892657039366990569450742219627256186766938021938287\
9603656906564416733268275319957309740551923358198579176153287555764242904421472114283709899984255314466433\
0404904384027745746605272709406214394077281827572339426136219640847421087300602695383542220029909328375886\
1578377116422763381665249724400816279085274871402772270510531188789749167205672072687459257990843335596742\
6187686671538912268649519154377368715898151933453098793172504084315911799081035232913127062198736711334781\
0717626923247021660901044216022764408942571761295771074530551783174567208344889504350258107462608534235119\
4675706868776366076465146991376300418892571221113976403998862864418344276135357701865874422973325150079746\
7501768483718237370025106218993825551076290245046680872681272148350815794759887912260683290899795915060880\
6013703943664419505621194228118231762212387888732953199508746830128029948251456319475543776092009178577716\
1082923337630423567924243041896820260802567577362671623460935438739035558414434183224164944201556049287907\
7991851307804633615629952757339246701775647067870966974093224634621910071023645348714835033340680626583962\
7181858720737793092568995339096500863883774440128094638960588682858167109320517538894030840557729910897452\
9052457258146806504479844312652933963934345039782920981316618021343753492261904833585652640510839540488804\
2612252670230526071211952938402703448682872325061917710133401179762443544506255485207598674924919709543956\
0480318790321161740832292030651151185976169256536200664790380139143032263888546981729200401142632103111392\
4064098599256006448009481354912118594842305295497311552816436214605893674831740404346654366643331475581457\
0822194809284724892831668254137875554607269348302940906636745391929777748817519335123765508977700051204383\
7533687288787877793156906985983015300658329325070253772068595074557067328255774593601995026745153779206686\
9650673586669794934924840022405511994813571337865722286519034198754214510515702993605481375015502086300325\
6018159355825032772167691770136544296756481710603864268227290506733859544325189994664233703918695556790811\
1315610210026763626186928192211646594444420509444892574914158485329039723865366746184767175008863773048656\
3446785524791116048212398255867963984760609741691455610921644482449200626753586774752932974017167339582031\
0790239779275945029198240056627738432662859945002357653451740145284785560911363996991203626319546870871705\
6808021878700216950298153005486414427596641552441541768460669711145598361205051763664490759041169284429599\
0365865929732392357857132459614529052060995248089072034429436921566507819543803237192871948024141916518736\
8418902014641978184855013104500221707838776463634643068277947172681309403115147907824510756707893774449421\
9207948930511720262176341738723606892094105170119994279261501743156994561213126256648627608227947223213849\
5013861890264189122129401515436677143878765049187831537840274254424382439643469154213721491144269067296361\
6051432753779913538873113633936518574532486982523322988706677914219270407026556650524538565864371064774748\
4867602003344929825895687057572953680017482076645977509597091659601823250525835336256115596721455871414374\
6351560051626179272980584314925670111185042232817880467226026682913911023733784867852189640489302773537683\
5717196907350765540368510887811021602414497973745995443737338335533027023950567389671881860493290854194750\
4545177330607937383529015834991073224632423885343798187392846024451158898698310032950777455747923800905182\
5392779456539720775448781391228678148890742424968562845987995586524902058280079962717162582155370835454789\
1650133177176132154040253188770800869123676869923554635172224567587671587185280855086810932854774829377859\
5404816312158436596143115062781708882448796023559409240906080947320967881759405195787139063876234354405964\
8235535589706649686543556212160299922543798489873871039814453052217419176811991881558223042302400904599466\
1066469178877447920337377742826574991181933695267091497918364089854392183039262391590917835160240205653970\
7517008929290929045177138658046268536040641420459151077068958543533667447527529852899548554504385661263910\
5169467841260768876286409103241121712596014243949615914988012323302604935241664873668009690540503022580253\
0092886885087913039140486583595252772751948108253829710122682656841190662323010434904953320922715075445544\
6953400512189316233150957313253295522784402751547898491983980946849472304171863600101812018288349972305434\
3675142359984931348757366084586724591785291993492794862287163922674152530252243396245896594675979483293876\
9882519662053824434839355757065422108074744179224763763393756442105145442912635588510161124874921305214834\
2613297513712007218064275799534932941517688671218664648593181312993033897256815354597112618020047499280393\
9314831200808631865510984083167559533814842373674105290245276488614557715526996385085352272913099523432935\
5790186423594446518896790756017039684508062634380893783856349512454804012364368801029663757192460525897738\
2957221568396487363503721741774449928007113090443864568563345791384313987988281295492162647792435761530022\
8229477443941100330578127970839153609400825917728711687262962264320440663300044409692211867133192177593701\
8843683810819813013716303795350336907369442796944580198184702129321621587922971310761603777486510459327119\
2098239336790041115235586652951234405020171727222920596506810093182173017789351842956657267087708147399802\
4289560797711850480242923609318460380224180343465101563776259867176310949318750072922431757429976110774286\
8319099581165622517990548728856574852630398033109865849741898667555480248902390455159311671366818475253743\
1389753030284509611133126252437483447692037906219577854313866400847113429253378800186340188422282717267127\
7446941798800013407885056007128597789494624864915276343900031559421621386286951816040749879863665006223642\
7968379231155553986145475983719319768342461793856748215620959021016826718981496550260969470147755379501057\
0871078487319635714406159784123823414856661308135128869641959971454171764830607476314368599309283532459596\
5162378531656065526671721943167070640678895986993179177522970315614823290252473278902992788632382975491694\
6259458765628161322779364284162695955187356750719648087830969366509301924478254916602649933197145644072612\
2600806094874162284575537023725967774911598430990616206995450998388691312826080455593244677227883664939275\
8743180667668711574566000192267748090068027010989237133174686693964296948580295080846432971836162538329165\
8956561964586437217020653914769113466389652583356840517121256081905679039871845071608742600937525438683301\
8871827428814665985538970185165001685612286274232074507320409785004642623288826337885633979749912868706827\
6298410776568323986384042579084030981280330613384202187554063920566492475619142508184504623103906907672760\
7424143260838778880463280247926523086954407994413817029467952210485710953835097102601108914410928658908288\
0396435205077411942021811533708204269970649977576717948908847573576676954706746170190144031913873215938734\
0097237305451204681534395237703226173175970189661527784957348564658268714614824589610076901495742866042611\
9905873978506289509569105562696245065699417421049512196559480177396582198913305505623780221356128417075902\
4890105123324292379564366528567637506886724818617247421447487120021274588245933610907789638096241131835623\
9186120471694496329116157762037371412190559327054446415787317053180263168201722831458633332659136449725605\
9284059436561013892147593994240444728220732223547725918339421897287125251936596528047937761314579761653696\
9142116584642816766414682450602232199282492689140335667980313897844036321338008062813277715242334981388978\
6345671961228643084164192194436687494583669751177632032292347816867892619444681624062612125673635499520281\
7403944778558726503738384256595284958566765722626112432529877044003057744040222897210799267196210596030326\
587734997713203691232075812702157475856209;

// If the following is true then the verification was successful
hpi eq g^log;
\end{verbatim}}

\end{subappendices}
  
 
\bibliographystyle{plain}
\bibliography{papersbib}


\end{document}